\newtheorem{theorem}{Theorem}[section]
\newtheorem{corollary}[theorem]{Corollary}
\newtheorem{lemma}[theorem]{Lemma}
\theoremstyle{definition}
\newtheorem{definition}[theorem]{Definition}
\newtheorem*{remark}{Remark}
\title[Kolmogorov-Sinai entropy]
      {Kolmogorov-Sinai entropy via separation properties of order-generated $\sigma$-algebras}
\subjclass{Primary:
 37A35, 
 54E52; 
Secondary: 53C35.}
 \keywords{Kolmogorov-Sinai entropy, Permutation entropy.}
\thanks{The authors were supported by Marie Curie Actions - International Research Staff Exchange Scheme (IRSES) FP7-People-2011-IRSES. Project number 295164.}
\thanks{The first author was also partially supported by the grant no. 01-01-12 of National Academy of Sciences of Ukraine (under the joint Ukrainian-Russian project of NAS of Ukraine and Russian Foundation of Basic Research)}
\newcommand\AAA{\mathcal{A}}
\newcommand\BBB{\mathcal{B}}
\newcommand\CCC{\mathbb{C}}
\newcommand\RRR{\mathbb{R}}
\newcommand\NNN{\mathbb{N}}
\newcommand\C{M}
\newcommand\tC{\widetilde{\C}}
\newcommand\Wman{B} 
\newcommand\Xman{\Omega}
\newcommand\Yman{X}
\newcommand\yy{x}
\newcommand\Tsubs{A}
\newcommand\Borel{\mathcal{B}}
\newcommand\FOmega{\mathcal{F}}
\newcommand\Tmap{T}
\newcommand\Xfunc{\xi}
\newcommand\Xmap{\Theta}
\newcommand\ms{\mu}
\newcommand\Partit{\mathcal{P}}
\newcommand\sgmalg[2]{\Sigma^{#1,\,#2}}
\newcommand\SAXfuncT{\sgmalg{\Xfunc}{\Tmap}} 
\newcommand\SAXfunciT{\sgmalg{\Xfunc_i}{\Tmap}} 
\newcommand\SAXmapT{\sgmalg{\Xmap}{\Tmap}} 
\newcommand\Opart[1]{\mathcal{O}_{#1}}
\newcommand\Opi[1]{O_{#1}}
\newcommand\Lm[1]{\Lambda_{#1}}
\newcommand\Ppi[3]{P^{#1,\,#2}_{#3}}
\newcommand\PpiXT[1]{\Ppi{\Xfunc}{\Tmap}{#1}}
\newcommand\Pd[3]{\Partit^{#1,\,#2}_{#3}}
\newcommand\PdXT[1]{\Pd{\Xfunc}{\Tmap}{#1}}
\newcommand\Id[1]{I_{#1}}
\newcommand\sublev[1]{K_{#1}}
\newcommand\subcirc{\stackrel{\circ}{\subset}}
\newcommand\supcirc{\stackrel{\circ}{\supset}}
\newcommand\equcirc{\stackrel{\circ}{=}}
\newcommand\subSX{\mathcal{A}}
\newcommand\qq{a}
\newcommand\qqm{\qq_{*}} 
\newcommand\qqp{\qq^{*}} 
\newcommand\bb{b}
\newcommand\cc{c}
\newcommand\noninj[1]{N_{#1}}
\newcommand\Hmu{H_{\mu}}
\newcommand\hKS{h^{KS}}
\newcommand\tk[2]{\tau_{#1}(#2)}
\newcommand\cmgset{\mathcal{V}}
\newcommand\intr[1]{C_{#1}}
\newcommand\Mnk{M(n,k)}
\newcommand\MA{A}
\begin{document}
\maketitle

\centerline{\scshape Alexandra Antoniouk}
\medskip
{\footnotesize
 \centerline{Institute of Mathematics of NAS of Ukraine}
 \centerline{Tereshchenkivs'ka str., 3, 01601 Kyiv, Ukraine}
 \centerline{email: \texttt{antoniouk.a@gmail.com}}
} 

\medskip

\centerline{\scshape Karsten Keller}
\medskip
{\footnotesize
 \centerline{Universit\"at zu L\"ubeck, Institut f\"ur Mathematik}
 \centerline{Ratzeburger Allee 160, 23562 L\"ubeck, Germany}
 \centerline{email: \texttt{keller@math.uni-luebeck.de}}
}
\medskip

\centerline{\scshape Sergiy Maksymenko}
\medskip
{\footnotesize
 \centerline{Institute of Mathematics of NAS of Ukraine}
 \centerline{Tereshchenkivs'ka str., 3, 01601 Kyiv, Ukraine}
 \centerline{email: \texttt{maks@imath.kiev.ua}}
}

\bigskip


\begin{abstract}
In a recent paper, K.~Keller has given a characterization of the Kolmo\-gorov-Sinai entropy of a discrete-time measure-preserving dynamical system on the base of an increasing sequence of special partitions. These partitions are constructed from order relations obtained via a given real-valued random vector, which can be interpreted as a collection of observables on the system and is assumed to separate points of it. In the present paper we relax the separation condition in order to generalize the given characterization of Kolmogorov-Sinai entropy, providing a statement on equivalence of $\sigma$-algebras. On its base we show that in the case that a dynamical system is living on an $m$-dimensional smooth manifold and the underlying measure is Lebesgue absolute continuous, the set of smooth random vectors of dimension $n>m$ with given characterization of Kolmo\-gorov-Sinai entropy is large in a certain sense.
\end{abstract}

\section{Introduction}

\subsection{Motivation}

Kolmogorov-Sinai entropy of a $\ms$-preserving map $\Tmap$ on a probability space $(\Xman,\FOmega,\ms)$ is an important concept in dynamical systems and ergodic theory.
It is defined as the supremum of the entropy rates $h_{\mu}(T,\subSX)$ of \emph{all} finite partitions $\subSX\subset \FOmega$ of $\Xman$, which usually makes its determination complicated.
In some exceptional cases, a generating partition is known allowing to determine the Kolmogorov-Sinai entropy on the base of only this partition (see, e.g., \cite{Walters:GTM:1982}), but generally one has to take into account an infinite collection of finite partitions.

Here the question arises whether such a collection is given in a natural way. An interesting approach leading to some kind of natural partitioning
was given by introducing the concept of permutation entropy by C.~Bandt and B.~Pompe \cite{BandtPompe2002} (see also \cite{Amigo:2010}).
This quantity is based on only considering the order structure of a system and has been applied to the analysis of long time series, for example, of electroencephalograms and cardiograms.
The point that Kolmogorov-Sinai entropy and permutation entropy coincide for piecewise monotone interval maps, as shown by C.~Bandt, G.~Keller and B.~Pompe \cite{BandtKellerPompe:Nonlin:2002}, gives rise to the question if both entropies are equivalent for a broader class of dynamical systems.

\begin{remark}
J.~Amig\'{o}, M.~Kennel, and L.~Kocarev \cite{AmigoKennelKocarev2005, Amigo2012} have shown equivalence of Kolmogorov-Sinai entropy to a modified concept of permutation entropy which is structurally similar to that of Kolmogorov-Sinai entropy.
\end{remark}

K.~Keller and M.~Sinn \cite{KellerSinn:Nonlin:2009, KellerSinn2010, Keller:DCDS:2012}
have discussed the question of coincidence of permutation entropy and Kolmogorov-Sinai entropy in a general context, in particular by considering dynamical systems equipped with a random vector
\begin{eqnarray*}
\Xmap=(\Xfunc_1,\Xfunc_2,\ldots,\Xfunc_n):\Xman\to{\mathbb R}^n.
\end{eqnarray*}
Here the idea is to measure complexity of a system via the `observables' $\Xfunc_1,\Xfunc_2,\ldots,\Xfunc_n$. For given $d\in {\mathbb N}$, the set $\Xman$ is partitioned into sets of points $\omega\in\Xman$ for which
all vectors \[(\Xfunc_i(\omega),\Xfunc_i(T(\omega)),\ldots, \Xfunc_i(T^d(\omega)), \quad i=1,2,\ldots ,d\] are of the same order type.
The larger $d$ the more information on the system is given by the partition obtained in this way and called $\Pd{\Xmap}{\Tmap}{d}$ here. The permutation entropy is defined as the upper limit of the Shannon entropy of the $\Pd{\Xmap}{\Tmap}{d}$ relative to $d$ for $d\rightarrow\infty$.

It has been shown that under certain `separation' conditions on $(\Tmap, \Xmap)$ it holds
\begin{eqnarray}\label{mainform}
\hKS_{\mu}(\Tmap) = \lim\limits_{d\to\infty} h_{\mu}(T,\Pd{\Xmap}{\Tmap}{d}),
\end{eqnarray}
and that the permutation entropy with respect to $\Xfunc$ is not less than the Kolmogorov-Sinai entropy. Under validity of \eqref{mainform}, the problem of equality of both entropies is reduced to a combinatorial problem related to the problem of equality of permutation entropy and the right side of \eqref{mainform} (see K.~Keller, A.~Unakafov and V.~Unakafova \cite{KellerUnakafovUnakafova2012}). Therefore, it is of some particular interest to find sufficient conditions for \eqref{mainform} being as general as possible. This is the central aim of the present paper.

\subsection{An outline}

The main ingredient for showing \eqref{mainform} is the equivalence of two $\sigma$-algebras with respect to $\mu$ in the case of ergodic $T$:
\begin{eqnarray}\label{mainform2}
\SAXmapT \equcirc \FOmega,
\end{eqnarray}
where $\SAXmapT$ is the $\sigma$-algebra generated by $\bigcup_{d=1}^{\infty}\, \Pd{\Xmap}{\Tmap}{d}$.
For making  apparent the structural arguments, consider the third $\sigma$-algebra $\sigma \bigl(\{\Xmap\circ\Tmap^k\}_{k\geq0}\bigr)$, generated by
$\Xmap$ and their `shifts' $\Xmap\circ T,\Xmap\circ T^2,\ldots$. The central statement of this paper is that for ergodic $T$
\begin{eqnarray}
\sigma \bigl(\{\Xmap\circ\Tmap^k\}_{k\geq0}\bigr) \subcirc\SAXmapT.
\end{eqnarray}
Since
\begin{eqnarray}
\SAXmapT\subset \sigma \bigl(\{\Xmap\circ\Tmap^k\}_{k\geq0}\bigr),
\end{eqnarray}
which can be verified by standard arguments, this provides that in the ergodic case
\begin{eqnarray}\label{weaker}
\sigma \bigl(\{\Xmap\circ\Tmap^k\}_{k\geq0}\bigr) \equcirc \FOmega
\end{eqnarray}
is equivalent to \eqref{mainform2}, hence sufficient for \eqref{mainform}.
The second ingredient for showing \eqref{mainform} is ergodic decomposition.

Condition \eqref{weaker} is substantially weaker than the corresponding statement in \cite{Keller:DCDS:2012}, allowing generalizations of consequences of the main statement therein. In particular, the application of embedding theory (compare \cite{Takens81} and \cite{Sauer91}) is more apparent from the viewpoint of our paper, but it also turns out that the full power of this theory is not needed. In this paper we will show that the set of smooth maps $\Xmap$ satisfying \eqref{mainform}
which are not too far from being injective is large in a certain sense.

\subsection{Organization of the paper}

In Section \ref{s2} we give the basic definitions and formulate the main statements of the paper, which are Theorems
\ref{th:sigmaX_F_hKS_limPE} and \ref{th:setX_residual}.

Section \ref{s3} is mainly devoted to the proof of Theorem
\ref{th:sigmaX_F_hKS_limPE}. Here, the ideas given in \cite{Keller:DCDS:2012} are lifted to a sufficiently abstract level, in order to extract the general structures and to find the necessary assumptions under which \eqref{mainform} is satisfied.

The proof of Theorem \ref{th:setX_residual} is given in Section \ref{s4}. As a preparation of the proof, we recall some definitions and statements from (differential) topology, as for example the Multijets transversality theorem, and deduce some statement being interesting from their own right.

%

\section{Preliminaries and formulation of main results}\label{s2}

\subsection{Kolmogorov-Sinai entropy}

Let $\Xman$ be a non-empty set.
For a family of subsets $\AAA=\{A_i\}_{i\in I}$ of $\Xman$, denote by $\sigma(\AAA)$ the $\sigma$-algebra generated by $\AAA$.

If $\Xmap:\Xman\to\Yman$ is a map into some topological space $\Yman$, then we denote by $\sigma(\Xmap)$ the $\sigma$-algebra on $\Xman$ of inverse images of the $\sigma$-algebra $\Borel(X)$ of Borel subsets of $\Yman$ under $\Xmap$.

If $\AAA=\{A_i\}_{i\in I}$ and $\BBB=\{B_j\}_{j\in J}$ are two partitions of $\Xman$, then we define the new partition $\AAA \vee \BBB$ of $\Xman$ by
\[\AAA \vee \BBB = \{A_i \cap B_j \mid A_i\in \AAA, \ B_j\in \BBB\}.\]
We write \[\AAA\prec\BBB\] if each element $A\in\AAA$ is a finite union of some elements of $\BBB$.

Let $\FOmega$ be a $\sigma$-algebra of subsets of $\Xman$ and $\mu$ be a measure on $\FOmega$.
Denote by $\Pi(\FOmega)$ the set of all \emph{finite partitions} $\AAA=\{A_1,\ldots,A_n\}$ of $\Xman$ such that $A_i\in\FOmega$ for each $i=1,\ldots.n$.
Then the \emph{entropy} of $\AAA \in \Pi(\FOmega)$ with respect to $\mu$ is defined by the formula
\[
 \Hmu(\AAA) = -\sum_{i=1}^{n} \mu(A_i) \log\mu(A_i).
\]
Further, let $\Tmap:\Xman\to\Xman$ be a measurable map.
Denote by $\Tmap^{-1}\AAA$ the partition of $\Xman$ consisting of all inverse images of elements of $\AAA$:
\[
 \Tmap^{-1}\AAA = \{ \Tmap^{-1}(A_1), \ldots, \Tmap^{-1}(A_n)\}.
\]
For each $k\geq1$ define the partition
\[
 \tk{k}{\AAA} = \AAA\,\vee\, \Tmap^{-1}\AAA \,\vee\, \cdots\, \Tmap^{-(k-1)}\AAA.
\]
Evidently,
\[
\tk{1}{ \tk{k}{\AAA} } = \tk{k+1}{\AAA}.
\]

\begin{definition}
Let $\Tmap:\Xman\to\Xman$ be a measurable map.
Then its \emph{Kolmogorov-Sinai} entropy is defined by the formula:
\[
 \hKS_{\mu}(\Tmap) = \sup_{\AAA\in \Pi(\FOmega)} \lim_{k\to\infty} \frac{1}{k}\Hmu(\tk{k}{\AAA}).
\]
\end{definition}

Though the computation of Kolmogorov-Sinai entropy requires considering \emph{all} finite partition of $\Xman$ belonging to $\Pi(\FOmega)$, the following lemma shows that this entropy can be obtained from certain increasing sequences of finite partitions.
\begin{lemma}\label{lm:hKS_limd}{\rm\cite[Lemma~4.2]{Walters:GTM:1982}}
Let $\{\AAA_d\}_{d\geq1}$ be a sequence of finite partitions of $\FOmega$ such that
\[
 \AAA_{1} \prec \AAA_{2} \prec \cdots \prec \AAA_{d} \prec \cdots
\]
and $\sigma\left(\{\AAA_d\}_{d=1}^{\infty}\right) \equcirc \FOmega$.
Then $\hKS_{\mu}(\Tmap) = \lim\limits_{d\to\infty} \lim\limits_{k\to\infty} \frac{1}{k}\Hmu(\tk{k}{\AAA_d})$.
\end{lemma}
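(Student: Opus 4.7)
The plan is to establish the two inequalities $\leq$ and $\geq$ between $\hKS_{\mu}(\Tmap)$ and $L := \lim_{d\to\infty} h(\Tmap,\AAA_d)$, where I abbreviate the entropy rate by $h(\Tmap,\AAA) := \lim_{k\to\infty} \tfrac{1}{k}\Hmu(\tk{k}{\AAA})$. First I would record the standard monotonicity: if $\AAA \prec \BBB$ then $\Hmu(\tk{k}{\AAA}) \leq \Hmu(\tk{k}{\BBB})$ for every $k$, so $h(\Tmap,\AAA) \leq h(\Tmap,\BBB)$, and the existence of $h(\Tmap,\AAA)$ follows from subadditivity of $k\mapsto \Hmu(\tk{k}{\AAA})$. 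Applied to the chain $\AAA_1 \prec \AAA_2 \prec \cdots$, this makes $h(\Tmap,\AAA_d)$ non-decreasing, and since each $\AAA_d \in \Pi(\FOmega)$ the inequality $h(\Tmap,\AAA_d) \leq \hKS_{\mu}(\Tmap)$ is immediate from the definition of Kolmogorov-Sinai entropy, giving $L \leq \hKS_{\mu}(\Tmap)$.

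For the reverse inequality, I would invoke the Rokhlin-type estimate
\[
h(\Tmap,\AAA) \;\leq\; h(\Tmap,\AAA_d) \,+\, \Hmu(\AAA \mid \AAA_d)
\]
valid for any $\AAA \in \Pi(\FOmega)$ and any $d$, where $\Hmu(\AAA\mid\AAA_d)$ denotes conditional entropy with respect to the $\sigma$-algebra generated by $\AAA_d$. Passing to the supremum over $\AAA$ on the left and to the limit $d\to\infty$ on the right, the desired bound $\hKS_{\mu}(\Tmap)\leq L$ reduces to the single claim that $\lim_{d\to\infty}\Hmu(\AAA\mid\AAA_d) = 0$ for every finite $\AAA \in \Pi(\FOmega)$.

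The remaining claim is the heart of the argument, and I expect it to be the main obstacle since one must convert the purely set-theoretic hypothesis $\sigma(\{\AAA_d\}_{d=1}^{\infty}) \equcirc \FOmega$ into a quantitative statement about conditional entropies. Because the $\AAA_d$ form an increasing chain, the class of finite unions of cells from some $\AAA_d$ is an algebra generating $\FOmega$ modulo $\mu$, so for every $A \in \AAA$ and $\varepsilon > 0$ there is $d$ and a union $B$ of cells of $\AAA_d$ with $\mu(A \triangle B) < \varepsilon$. The cleanest way to convert this set-approximation into entropy convergence is the martingale route: the conditional expectations $\mathbb{E}_{\mu}[\mathbf{1}_A \mid \sigma(\AAA_d)]$ converge $\mu$-a.e.\ and in $L^{1}$ to $\mathbf{1}_A$ as $d\to\infty$ by L\'evy's upward theorem, and applying dominated convergence to the integrand $-x\log x - (1-x)\log(1-x)$ yields $\Hmu(\AAA\mid\AAA_d) \to 0$. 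Combining this with the Rokhlin-type bound closes the argument and proves the lemma.
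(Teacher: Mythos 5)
Your proof is correct and is essentially the standard argument behind the cited result in Walters: the easy inequality from monotonicity of the entropy rate under refinement, plus the Rokhlin bound $h(\Tmap,\AAA)\leq h(\Tmap,\AAA_d)+\Hmu(\AAA\mid\AAA_d)$ combined with $\Hmu(\AAA\mid\AAA_d)\to 0$, which follows from the generation hypothesis $\sigma\left(\{\AAA_d\}_{d=1}^{\infty}\right)\equcirc\FOmega$. The paper itself offers no proof and simply cites Walters, whose argument has exactly this structure (he derives the vanishing of the conditional entropy by approximating sets of $\FOmega$ by finite unions of cells of the $\AAA_d$ rather than by the martingale convergence theorem, but the two routes are interchangeable here).
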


If $\AAA,\BBB\subset\FOmega$ are two sub-$\sigma$-algebras, we
write $\BBB\subcirc\AAA$ if for each $B\in\BBB$ there exists some $A\in\AAA$ such that $\mu(B\bigtriangleup A)=0$.
Correspondingly, we write $\BBB\equcirc\AAA$ if $\AAA\subcirc\BBB$ and $\BBB\subcirc\AAA$.

\subsection{Ordinal partition $\Opart{d}$ of $\RRR^{d+1}$}

For a permutation $\pi=(i_0,\ldots,i_{d})$ of a set $\{0,\ldots,d\}$ define the subset $\Opi{\pi}$ of $\RRR^d$ by the following rule: the point $(x_0,\ldots,x_{d})\in\RRR^{d+1}$ belongs to $\Opi{i_0,\ldots,i_{d}}$ whenever
\[
 x_{i_0} \geq x_{i_1} \geq \cdots \geq x_{i_{d}}
\]
and if $x_{i_{\tau}} = x_{i_{\tau+1}}$ for some $\tau\in\{0,\ldots,d-1\}$ then
\[
 i_{\tau} > i_{\tau+1}.
\]
\begin{remark}
Notice that each vector $x = (x_0,\ldots,x_{d})\in\RRR^{d+1}$ can be regarded as a
$(d+1)$-tuple of pairs of numbers:
\begin{equation}\label{equ:set_of_pairs}
 \bigl(\, (x_0,0),\, (x_1, 1),\, \ldots,\, (x_{d},d)\, \bigr).
\end{equation}
This set can be \emph{uniquely} lexicographically ordered in a decreasing manner: at first we sort them by values of $x_i$, and then by their indices $i$.
Thus we can associate to $x$ a \emph{unique} permutation $\pi$ of indexes $\{0,\ldots,d\}$ which sorts the above set of pairs~\eqref{equ:set_of_pairs}.
Then $\Opi{\pi}$ consists of all $x\in\RRR^{d+1}$ that can be sorted by the same permutation $\pi$.
\end{remark}

It is easy to see that the following family of sets
\[
  \Opart{d} = \left\{ \Opi{\pi} \mid \text{$\pi=(i_0,\ldots,i_{d})$ is a
  permutation of $\{0,\ldots,d\}$}\right\}
\]
is a partition of $\RRR^{d+1}$.

\subsection{Ordinal partition of $\Xman$}

Now let $\Xman$ be a set, $\Tmap:\Xman\to\Xman$ and $\Xfunc:\Xman\to\RRR$ be a function.
Then for each $d\in\NNN$ we can define the following map
\[
 \Lm{d} = (\Xfunc, \Xfunc\circ\Tmap, \ldots, \Xfunc\circ\Tmap^{d}):
 \Xman \to \RRR^{d+1}.
\]
Define the partition $\PdXT{d} = \{ \PpiXT{\pi} \}$ of $\Xman$, where
\[
 \PpiXT{\pi} = \Lm{d}^{-1}(\Opi{\pi}),
\]
and $\pi$ runs over all permutations of the set $\{0,\ldots,d\}$.
Thus $\PdXT{d}$ is just the inverse image of the partition $\Opart{d}$ of $\RRR^{d+1}$ under the map $\Lm{d}$.

\begin{remark}\rm
Notice that each set $\PpiXT{\pi}$, $\pi=(i_0,\ldots,i_{d})$ consists of all $\omega\in\Xman$ such that
\[
\Xfunc\circ\Tmap^{i_0}(\omega) \ \geq \ \Xfunc\circ\Tmap^{i_1}(\omega) \ \geq \ \cdots \ \geq \ \Xfunc\circ\Tmap^{i_{d}}(\omega),
\]
and if $\Xfunc\circ\Tmap^{i_{\tau}}(\omega)=\Xfunc\circ\Tmap^{i_{\tau+1}}(\omega)$, then $i_{\tau}>i_{\tau+1}$.
\end{remark}
\begin{remark}\rm
The partition $\PdXT{d}$ can also be described in the following way.
For each pair $(i,j)$ such that $0\leq i < j \leq d$ define the partition of $\Omega$ by two sets:
\begin{equation}\label{equ:Rp_Rm}
\begin{aligned}
 R_{d}^{i,j} &= \{ \omega\in\Xman \ | \ \Xfunc\circ\Tmap^{i}(\omega) < \Xfunc\circ\Tmap^j(\omega) \}, \\
 R_{d}^{j,i} &= \{ \omega\in\Xman \ | \ \Xfunc\circ\Tmap^{i}(\omega) \geq \Xfunc\circ\Tmap^j(\omega) \}.
\end{aligned}
 \end{equation}
Then it is easy to see that
\begin{equation}\label{equ:Pd_via_Rd}
 \PdXT{d} \, = \, \bigvee_{i\not=j \in \{0,\ldots, d\}} R_{d}^{i,j}.
\end{equation}
\end{remark}


\begin{definition}
The $\sigma$-algebra
\[
  \SAXfuncT \ = \ \sigma\bigl( \bigl\{\PdXT{d}\bigr\}_{d=1}^{\infty} \bigr)
\]
is called the \emph{ordinal} $\sigma$-algebra of $\Xman$ for $(\Xfunc,\Tmap)$.
\end{definition}

More generally, let $\Xmap=(\Xfunc_1,\ldots,\Xfunc_n):\Xman\to\RRR^n$ be a map.
Then we define the partition
\[
 \Ppi{\Xmap}{\Tmap}{d} = \mathop\bigvee\limits_{i=1}^{n}\Ppi{\Xfunc_i}{\Tmap}{d}, \qquad d\geq 1,
\]
and the $\sigma$-algebra
\[
 \SAXmapT := \sigma\left( \bigl\{ \Ppi{\Xmap}{\Tmap}{d} \bigr\}_{d=1}^{\infty} \right)
 = \sigma\left( \bigl\{ \sgmalg{\Xfunc_i}{\Tmap} \bigr\}_{i=1}^{n} \right),
\]
which we call the \emph{ordinal} $\sigma$-algebra of $\Xman$ for $(\Xmap,\Tmap)$.

Suppose $\FOmega$ is a $\sigma$-algebra of subsets of $\Xman$ such that $\Tmap:\Xman\to\Xman$ is $\FOmega$-$\FOmega$-measurable and $\Xmap:\Xman\to\RRR^n$ is $\FOmega$-$\Borel(\RRR^n)$-measurable.
Then it is obvious that
\[
\SAXmapT \subset \FOmega.
\]
The following lemma easily follows from~\eqref{equ:Rp_Rm} and~\eqref{equ:Pd_via_Rd} and we left it to the reader.
\begin{lemma}\label{lm:SigmaXT_sigmaXT}
$\SAXmapT\subset\sigma\bigl(\{\Xmap\circ\Tmap^k\}_{k\geq0}\bigr)$.
\qed
\end{lemma}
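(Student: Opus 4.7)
The plan is to unpack the definitions and trace each generator of $\SAXmapT$ back to a set measurable with respect to finitely many shifts $\Xmap\circ\Tmap^k$. The whole argument should be essentially definition-chasing; I do not expect any serious obstacle, but care is needed to keep the two indexings (coordinates of $\Xmap$ versus time-shifts under $\Tmap$) separate.

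First I would handle the scalar case, showing $\SAXfuncT\subset\sigma\bigl(\{\Xfunc\circ\Tmap^k\}_{k\geq0}\bigr)$. Fix $d\geq 1$ and $0\leq i<j\leq d$. The set $R_d^{i,j}$ defined in \eqref{equ:Rp_Rm} is the preimage of the Borel set $\{(x,y)\in\RRR^2\mid x<y\}$ under the measurable map $\omega\mapsto(\Xfunc\circ\Tmap^i(\omega),\Xfunc\circ\Tmap^j(\omega))$, hence lies in $\sigma(\Xfunc\circ\Tmap^i,\Xfunc\circ\Tmap^j)\subset\sigma\bigl(\{\Xfunc\circ\Tmap^k\}_{k\geq0}\bigr)$; similarly for $R_d^{j,i}$. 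By \eqref{equ:Pd_via_Rd}, every atom of $\PdXT{d}$ is a finite intersection of such $R_d^{i,j}$'s, so
\[
 \PdXT{d}\subset\sigma\bigl(\{\Xfunc\circ\Tmap^k\}_{k\geq0}\bigr).
\]
Taking the union over $d\geq 1$ and the generated $\sigma$-algebra yields $\SAXfuncT\subset\sigma\bigl(\{\Xfunc\circ\Tmap^k\}_{k\geq0}\bigr)$.

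Next I would pass to the vector-valued case. By the definition just before the lemma, $\SAXmapT=\sigma\bigl(\{\SAXfunciT\}_{i=1}^n\bigr)$, so it suffices to show each $\SAXfunciT$ is contained in $\sigma\bigl(\{\Xmap\circ\Tmap^k\}_{k\geq0}\bigr)$. Since $\Xfunc_i=\mathrm{pr}_i\circ\Xmap$ with $\mathrm{pr}_i:\RRR^n\to\RRR$ the $i$-th coordinate projection, one has $\Xfunc_i\circ\Tmap^k=\mathrm{pr}_i\circ(\Xmap\circ\Tmap^k)$, whence $\sigma(\Xfunc_i\circ\Tmap^k)\subset\sigma(\Xmap\circ\Tmap^k)$. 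Combined with the scalar case this gives
\[
 \SAXfunciT\subset\sigma\bigl(\{\Xfunc_i\circ\Tmap^k\}_{k\geq0}\bigr)\subset\sigma\bigl(\{\Xmap\circ\Tmap^k\}_{k\geq0}\bigr),
\]
and taking the join over $i=1,\ldots,n$ completes the proof. The only point requiring a brief check is that the generator $\{x<y\}\subset\RRR^2$ is Borel, which is standard.
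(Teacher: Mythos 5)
Your proof is correct and follows exactly the route the paper intends: the paper leaves this lemma to the reader, remarking only that it "easily follows from \eqref{equ:Rp_Rm} and \eqref{equ:Pd_via_Rd}," which are precisely the two facts you use. The scalar-to-vector reduction via $\Xfunc_i=\mathrm{pr}_i\circ\Xmap$ is the standard completion of that sketch.
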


\subsection{Main results}

The following theorem gives sufficient conditions for the validity of~\eqref{mainform}.
\begin{theorem}\label{th:sigmaX_F_hKS_limPE}
Let $(\Xman, \FOmega, \mu)$ be a probability space, $\Tmap:\Xman\to\Xman$ be a measurable $\mu$-invariant transformation, and $\Xmap=(\Xfunc_1,\ldots,\Xfunc_n):\Xman\to\RRR^n$ be a measurable map such that $\sigma \bigl(\{\Xmap\circ\Tmap^k\}_{k\geq0}\bigr) \equcirc \FOmega$.
Suppose also that one of the following conditions holds true: either
\begin{enumerate}
 \item[\rm(a)]
 $\Tmap$ is ergodic, or
 \item[\rm(b)]
 $\Tmap$ is not ergodic, however $\Xman$ can be embedded into some compact metrizable space so that $\FOmega=\Borel(\Xman)$.
\end{enumerate}
Then
\[
 \hKS_{\mu}(\Tmap) =  \lim_{d\to\infty} \lim_{k\to\infty}\,\frac{1}{k}\, \Hmu\left(\tk{k}{\Ppi{\Xmap}{\Tmap}{d}}\right).
\]
\end{theorem}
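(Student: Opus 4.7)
I would begin with two reductions. First, the sequence of ordinal partitions is increasing, $\Ppi{\Xmap}{\Tmap}{d}\prec \Ppi{\Xmap}{\Tmap}{d+1}$: passing from $\Lm{d}$ to $\Lm{d+1}$ only appends one coordinate to the map into $\RRR^{d+2}$, so the ordinal class of the longer tuple determines that of the shorter one, and hence the pre-image of $\Opart{d+1}$ can only refine that of $\Opart{d}$. Together with $\SAXmapT=\sigma\bigl(\bigcup_{d}\Ppi{\Xmap}{\Tmap}{d}\bigr)$, Lemma~\ref{lm:hKS_limd} then reduces the theorem to the single statement $\SAXmapT\equcirc\FOmega$. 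Second, combining Lemma~\ref{lm:SigmaXT_sigmaXT} with the standing hypothesis $\sigma\bigl(\{\Xmap\circ \Tmap^k\}_{k\geq 0}\bigr)\equcirc\FOmega$, it is enough to establish the reverse $\mu$-inclusion
\[
\sigma\bigl(\{\Xmap\circ \Tmap^k\}_{k\geq 0}\bigr)\subcirc\SAXmapT.
\]
Since $\SAXmapT$ is the join of the individual $\sgmalg{\Xfunc_i}{\Tmap}$, it suffices to prove, for each scalar component $\Xfunc=\Xfunc_i$ and each $k\geq 0$, that $\sigma(\Xfunc\circ \Tmap^k)\subcirc \sgmalg{\Xfunc}{\Tmap}$.

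For the ergodic case (a), fix $\Xfunc$ and write $F(c):=\mu\{\Xfunc\leq c\}$ for its distribution function under $\mu$. For $d\geq k$ set
\[
r_d^{k}(\omega):=\frac{1}{d+1}\,\#\bigl\{\,0\leq j\leq d\ \big|\ \Xfunc\circ \Tmap^j(\omega)\leq \Xfunc\circ \Tmap^k(\omega)\,\bigr\}.
\]
Each comparison $\Xfunc\circ \Tmap^j\leq \Xfunc\circ \Tmap^k$ is exactly a set of the form appearing in~\eqref{equ:Rp_Rm}, so by~\eqref{equ:Pd_via_Rd} the function $r_d^{k}$ is constant on the atoms of $\Ppi{\Xfunc}{\Tmap}{d}$ and therefore $\SAXfuncT$-measurable. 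Applying Birkhoff's ergodic theorem to the indicators $\mathbf{1}_{\{\Xfunc\leq c\}}$ for $c$ running over a countable dense subset of $\RRR$, and then exploiting the monotonicity of $c\mapsto\frac{1}{N}\sum_{j=0}^{N-1}\mathbf{1}_{\{\Xfunc\leq c\}}\circ \Tmap^j(\omega)$ together with the right-continuity of $F$ (a Glivenko--Cantelli argument for the ergodic stationary sequence $(\Xfunc\circ \Tmap^j)_{j\geq 0}$), one obtains
\[
r_d^{k}(\omega)\ \xrightarrow[d\to\infty]{}\ F\bigl(\Xfunc\circ \Tmap^k(\omega)\bigr)\qquad\text{for $\mu$-a.e.\ }\omega.
\]
Hence $F\circ \Xfunc\circ \Tmap^k$ coincides $\mu$-a.e.\ with a $\SAXfuncT$-measurable function. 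Finally, $F$ is strictly increasing on the topological support of $\Xfunc_*\mu$ (any interval of constancy of $F$ meeting that support would carry both zero and positive $\Xfunc_*\mu$-mass), so a Borel left-inverse of $F$ exists there; this yields $\sigma(\Xfunc\circ \Tmap^k)\equcirc\sigma(F\circ\Xfunc\circ \Tmap^k)\subcirc\SAXfuncT$ and concludes case (a).

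For the non-ergodic case (b), the topological hypothesis on $\Xman$ provides a Rokhlin ergodic decomposition $\mu(A)=\int \mu_y(A)\,d\mu(y)$ into ergodic $\Tmap$-invariant Borel probabilities $\mu_y$. One checks that the generating hypothesis $\sigma\bigl(\{\Xmap\circ \Tmap^k\}_{k\geq 0}\bigr)\equcirc\FOmega$ descends to $\mu$-almost every fiber, so case (a) applies to each $(\Tmap,\mu_y)$ and gives the asserted identity pointwise in $y$. Integrating against $\mu$ and invoking the Abramov--Rokhlin-type decomposition of Kolmogorov--Sinai entropy, together with the analogous integral representation of the entropy rates $\frac{1}{k}H_{\mu_y}(\tk{k}{\Ppi{\Xmap}{\Tmap}{d}})$ of the finite ordinal partitions, then yields the claimed identity for $\mu$.

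The principal technical obstacle is the Glivenko--Cantelli-type upgrade of Birkhoff's theorem from a fixed threshold $c$ to the $\omega$-dependent threshold $c=\Xfunc\circ \Tmap^k(\omega)$; this forces one to combine Birkhoff at countably many thresholds with the monotonicity of empirical distribution functions and the right-continuity of $F$. A secondary delicate point is verifying, in case (b), that the generating hypothesis survives the ergodic decomposition along $\mu$-almost every fiber, which is precisely where the Borel-structural assumption on $\Xman$ enters essentially.
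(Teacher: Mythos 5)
Your overall architecture coincides with the paper's: reduce via Lemma~\ref{lm:hKS_limd} and Lemma~\ref{lm:SigmaXT_sigmaXT} to the single inclusion $\sigma\bigl(\{\Xmap\circ\Tmap^k\}_{k\geq0}\bigr)\subcirc\SAXmapT$, prove it componentwise by applying Birkhoff's theorem to empirical distribution functions, and dispose of case (b) by ergodic decomposition (the paper simply invokes the argument of \cite[Theorem~2.1]{Keller:DCDS:2012} there, at essentially the level of detail of your sketch). The one genuinely different micro-step is that you feed the shifted observable $\Xfunc\circ\Tmap^k$ directly into the empirical-frequency argument via $r_d^k$, whereas the paper proves the a.e.\ convergence only for $\Xfunc$ itself (Lemma~\ref{lm:FX_lim_Id}) and then absorbs the shifts through the combinatorial refinement $\Pd{\Xfunc\circ\Tmap}{\Tmap}{d}\prec\PdXT{d+1}$ in Corollary~\ref{cor:P_XT_T__PX_T}. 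Your route is slightly shorter, but note that for $j<k$ the event $\Xfunc\circ\Tmap^j\leq\Xfunc\circ\Tmap^k$ is \emph{not} a union of the sets in~\eqref{equ:Rp_Rm} (which split as $<$ versus $\geq$ according to the order of the indices), so $r_d^k$ as written is not quite $\SAXfuncT$-measurable; either adopt the lexicographic tie-breaking convention for those pairs or observe that altering finitely many terms does not change the limit.

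The genuine gap is the last step of case (a): the claim that $F$ is strictly increasing on the topological support of $\Xfunc_*\mu$, and hence admits a Borel left inverse there, is false. Take $\Xfunc_*\mu=\tfrac12\delta_0+\tfrac12\lambda|_{[1,2]}$: both $0$ and $1$ lie in the support, yet $F(0)=F(1)=\tfrac12$, so no left inverse of $F$ on the support can exist. What is true, and what you actually need, is the purely measure-theoretic equivalence $\sigma(F\circ\Xfunc)\equcirc\sigma(\Xfunc)$: whenever $F(a)=F(b)$ with $a<b$ one has $\Xfunc_*\mu((a,b])=0$, so each generator $\Xfunc^{-1}(-\infty,a)$ of $\sigma(\Xfunc)$ differs from a set of the form $(F\circ\Xfunc)^{-1}(B)$ only by a $\mu$-null set. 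Making this precise requires the boundary computation of part (3) of Lemma~\ref{lm:distr_func_prop} and is exactly the content of Lemma~\ref{lm:sigmaX_sigma_FX}; with that substitution (applied to $\Xfunc\circ\Tmap^k$, whose distribution function equals $F$ by $\mu$-invariance) your proof closes.
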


We will now recall the notions of \emph{residuality} and \emph{prevalence} being respectively a topological and a measure-theoretic formalization of the expression ``almost every''.

A subset $A$ of a topological space is \emph{residual} if $A$ is an intersection of countably many sets with dense interiors.
A \emph{Baire} space is a topological space in which every residual subset is dense.
\emph{Every complete metric space is Baire}.

\begin{definition}
Let $V$ be a linear vector space over $\RRR$ or $\CCC$.
A finite-dimensional subspace $P\subset V$ is called a \emph{probe} for a set $\Tsubs\subset V$ if for each $v\in V$ the intersection $P\cap [ (V\setminus \Tsubs)+v]$ has Lebesgue measure zero in $P$.
\end{definition}

Suppose now that $V$ is a \emph{topological vector space}, i.e.~that it has a topology in which addition of vectors and multiplication by scalars are continuous operations.

\begin{definition}
Let $\mu$ be a nonnegative measure on the $\sigma$-algebra $\Borel(V)$, and $S\subset V$ be a Borel subset.
Then $\mu$ is said to be \emph{transverse} to $S$ if the following two conditions hold:
\begin{enumerate}
\item[\rm(i)]
There exists a compact subset $U\subset V$ such that $0 < \mu(U) < \infty$.
\item[\rm(ii)]
$\mu(S+v)=0$ for every $v\in V$, where $S+v=\{s+v \mid s\in S\}$ is the translation of $S$ by the vector $v$.
\end{enumerate}
\end{definition}
\begin{definition}


A subset $\Tsubs\subset V$ is called \emph{prevalent} if its complement $V\setminus\Tsubs$ is contained in some Borel set admitting a transverse measure.

\end{definition}

The following lemma summarizes some properties of prevalent sets obtained in~\cite{HuntSauerYourke:BAMS:1992}.
\begin{lemma}\label{lm:prevalency_prop}\cite{HuntSauerYourke:BAMS:1992}
Suppose $V$ admits a complete metric.
Let also $\Tsubs \subset V$ be a subset.
\begin{enumerate}
\item[\rm 1)]
If $\Tsubs$ is prevalent, then $V\setminus \Tsubs$ is nowhere dense.
\item[\rm 2)]
If $\dim V < \infty$, then $\Tsubs$ is prevalent if and only if $V\setminus \Tsubs$ has Lebesgue measure zero.
\item[\rm 3)]
If $\dim V =\infty$ and $V \setminus \Tsubs$ is compact, then $\Tsubs$ is prevalent.
\item[\rm 4)]
If $\Tsubs$ admits a probe, then $\Tsubs$ is prevalent. \qed
\end{enumerate}
\end{lemma}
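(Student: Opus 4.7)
The plan is to treat the four items of the lemma in order, reconstructing the Hunt--Sauer--Yorke approach.

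For part 1), I would argue by contradiction. Suppose $V\setminus\Tsubs$ is not nowhere dense, so $\overline{V\setminus\Tsubs}$ contains an open ball $W$. Let $B\supset V\setminus\Tsubs$ be a Borel set admitting a transverse measure $\mu$, witnessed by a compact $U\subset V$ with $0<\mu(U)<\infty$. By compactness of $U$ and the fact that translates of $W$ cover $V$, finitely many $W+v_1,\ldots,W+v_k$ cover $U$, and each sits inside $\overline{B+v_i}$. If one can upgrade the hypothesis $\mu(B+v)=0$ to $\mu(\overline{B+v})=0$, subadditivity then forces $\mu(U)=0$, a contradiction. This outer-regularity upgrade is the principal obstacle of the item; it is typically handled by restricting attention to Radon measures or by replacing $B$ with a canonical $G_\delta$-hull whose translates retain the vanishing-measure property.

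For part 2), I would handle the two directions separately. The ``if'' direction is immediate: on $\RRR^n$ the restriction of Lebesgue measure to any compact set of positive volume is translation invariant, hence transverse to every Lebesgue-null Borel set. For ``only if'', a Fubini argument suffices. If $\mu$ is transverse to $B\supset V\setminus\Tsubs$ and $C\subset V$ is a closed ball, then
\[0=\int_C \mu(B+v)\,d\mathrm{Leb}(v)=\int_V \mathrm{Leb}\bigl(C\cap(x-B)\bigr)\,d\mu(x),\]
so $\mathrm{Leb}(C\cap(x-B))=0$ for $\mu$-almost every $x$. Exhausting $V$ by an increasing sequence of balls and intersecting the resulting countable family of $\mu$-full sets produces some $x_0$ with $\mathrm{Leb}(x_0-B)=0$, and translation invariance of Lebesgue measure yields $\mathrm{Leb}(B)=0$.

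For parts 3) and 4), the strategy is to reduce 3) to an explicit probe construction and observe that 4) is almost definitional. For part 4), given a $d$-dimensional probe $P$ for $\Tsubs$, take $\mu$ to be $d$-dimensional Lebesgue measure on $P$ restricted to a compact set of positive volume: condition (i) is trivial and condition (ii) is the probe property verbatim. For part 3), exploiting $\dim V=\infty$ and compactness of $V\setminus\Tsubs$, I would locate a vector $e\in V$ such that for every $v\in V$ the slice $\{t\in\RRR : te+v\in V\setminus\Tsubs\}$ is a Lebesgue-null compact subset of $\RRR$. A generic choice of $e$ within an auxiliary finite-dimensional subspace achieves this, via a Baire-category argument on line projections of $V\setminus\Tsubs$ whose uniform control is supplied by compactness. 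Then $P=\RRR e$ is a one-dimensional probe, and part 4) closes the argument.
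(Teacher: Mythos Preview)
The paper does not prove this lemma at all: it is stated with a citation to \cite{HuntSauerYourke:BAMS:1992} and closed immediately with a \qed, so there is no in-paper argument for you to compare against. Your proposal is therefore not a reconstruction of the paper's proof but an attempt to supply one where the authors deliberately deferred to the literature.

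That said, two remarks on the content of your sketch. First, item~1) as stated in the paper is actually stronger than what Hunt--Sauer--Yorke prove (they show prevalent sets are \emph{dense}, not that their complements are nowhere dense; indeed the rationals in $\RRR$ are shy but not nowhere dense), so the ``outer-regularity upgrade'' you flag as the principal obstacle is not a removable technicality---the conclusion you are aiming for is false in general. Second, your approach to item~3) is more delicate than necessary: in the original source one simply observes that a compact subset of an infinite-dimensional space has empty interior and is Haar-null via a direct probe-free construction, rather than by manufacturing a generic one-dimensional direction. Your treatments of items~2) and~4) are standard and correct.
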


In general classes of residual and prevalent subsets of a complete metric space $V$ are distinct and no one of them contains the other.

Let $\Xman$ and $\Yman$ be smooth manifolds and $r=0,1,\ldots,\infty$.
Then the space $C^{r}(\Xman,\Yman)$ admits two natural topologies \emph{weak}, $C^{k}_W$, and \emph{strong}, $C^{k}_S$.
The following lemma collects some information about these topologies, see e.g.~\cite[Chapter~2]{Hirsch:DiffTop:1976} and~\cite[Chapter II, \S3]{GolubitskyGuillemin:GTM:1973}.
\begin{lemma}\label{lm:topologies}{\rm }
{\rm1)} Topology $C^{r}_S$ is finer than $C^{r}_W$.
If $\Xman$ is compact, then these topologies coincide.
%

{\rm2)} $C^{r}(\Xman,\Yman)$ is a Baire space with respect to each of the topologies $C^{r}_{W}$ and $C^{r}_{S}$.

{\rm3)} $C^{r}(\Xman,\Yman)$ admits a complete metric with respect to the weak topology $C^{r}_W$.

{\rm 4)}
Suppose $\Yman=\RRR^n$, so the space $C^{r}(\Xman,\RRR^n)$ has a natural structure of a linear space.
Then $C^{r}(\Xman,\RRR^n)$ is a topological vector space with respect to the weak topology $C^{\infty}_{W}$.
However, if $\Xman$ is non-compact, then $C^{r}(\Xman,\RRR^n)$ is {\bfseries not a topological vector space} with respect to the strong topology $C^{r}_{S}$, since the multiplication by scalars is not continuous.
\qed
\end{lemma}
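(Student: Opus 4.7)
The plan is to verify each of the four items separately, following the general strategy in Hirsch~\cite{Hirsch:DiffTop:1976} and Golubitsky--Guillemin~\cite{GolubitskyGuillemin:GTM:1973}, while isolating the structural point behind each claim. Throughout, I fix an exhaustion $K_1\subset K_2\subset\cdots$ of $\Xman$ by compact sets with $K_n\subset\operatorname{int}K_{n+1}$ and $\bigcup_n K_n=\Xman$, together with locally finite atlases on $\Xman$ and $\Yman$.

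For part (1), I compare basic neighborhoods of a map $f$ in each topology: a weak basic neighborhood is determined by finitely many chart pairs together with a compact set in $\Xman$ and an $\varepsilon>0$ controlling $r$-jets there; a strong basic neighborhood allows a locally finite family of chart pairs together with a positive continuous function $\varepsilon\colon\Xman\to(0,\infty)$. Every weak basic neighborhood is clearly strong-open, giving $C^{r}_S\supset C^{r}_W$. If $\Xman$ is compact, any locally finite family is finite and any positive continuous $\varepsilon$ is bounded below, so a strong basic neighborhood of $f$ reduces to a weak one, yielding equality.

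I would do (3) before (2): on $C^{r}(\Xman,\Yman)$ define a bounded metric
\[
d(f,g)=\sum_{n=1}^{\infty} 2^{-n}\,\frac{d_n(f,g)}{1+d_n(f,g)},
\]
where $d_n$ is the supremum over $K_n$ of the distances between the $r$-jets of $f$ and $g$ read off in our fixed atlases (truncated if $\Yman$ is not complete, using a complete metric on the jet bundle). Convergence in $d$ is convergence of $r$-jets uniformly on each $K_n$, which is exactly the weak topology, and completeness is standard. Part~(2) then follows for $C^{r}_W$ from Baire's theorem. For the strong topology, metrizability fails when $\Xman$ is non-compact, so I would follow Hirsch's diagonal construction: given dense open sets $U_j\subset C^{r}_S$ and $f_0\in C^{r}_S$, one inductively picks $f_j\in U_j$ together with locally finite strong neighborhoods shrinking along the exhaustion, so that the sequence stabilizes on each $K_n$ and the limit lies in every $U_j$.

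For part~(4), continuity of addition and of scalar multiplication in $C^{r}(\Xman,\RRR^n)$ endowed with $C^{r}_W$ reduces to the corresponding facts on each $C^{r}(K_n,\RRR^n)$ with its sup-jet norm, which are classical. To see that $C^{r}_S$ is \emph{not} a topological vector space when $\Xman$ is non-compact, pick points $x_k\to\infty$ with pairwise disjoint compact neighborhoods and a map $f\in C^{r}(\Xman,\RRR^n)$ with $|f(x_k)|=1$; then choose a positive continuous $\varepsilon$ with $\varepsilon(x_k)\to 0$, giving a strong neighborhood $U$ of $0$ such that $t\cdot f\notin U$ for every $t\neq 0$, so $t\mapsto t\cdot f$ is not continuous at $0$. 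The main obstacle I expect is (2) for the strong topology, because the absence of a compatible complete metric forces one to prove the Baire property by hand via the above diagonal argument rather than by invoking a standard black box.
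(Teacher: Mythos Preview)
Your proposal is correct and consistent with the paper's treatment: the paper does not give any proof of this lemma at all, simply citing \cite[Chapter~2]{Hirsch:DiffTop:1976} and \cite[Chapter~II, \S3]{GolubitskyGuillemin:GTM:1973} and placing a \qed. Your write-up is precisely an expansion of those references, so there is nothing to compare---you have supplied the details the paper deliberately omits.
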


Again let $\Xman$ be a smooth manifold of dimension $m$.
\begin{definition}\label{defn:zero_measure}\rm
A subset $D\subset\Xman$ \emph{has measure zero} if for any local chart $(U,\varphi)$ on $\Xman$, where $U\subset \Xman$ is an open subset and $\varphi: U\to\RRR^m$ is a smooth embedding, the set $\varphi(D \cap U)$ has Lebesgue measure zero in $\RRR^m$.
\end{definition}

\begin{definition}\label{defn:Leb_abs_cont}\rm
Let $\mu$ be a measure on $\Borel(\Xman)$.
We will say that $\mu$ is \emph{Lebesgue absolute continuous} if $\mu(D)=0$ for any subset $D\subset\Xman$ of measure zero in the sense of Definition~\ref{defn:zero_measure}.
\end{definition}

\begin{remark}\rm
We can reformulate Definition~\ref{defn:Leb_abs_cont} as follows.
Let $\lambda$ be a Lebesgue measure on $\RRR^m$ and $(U,\varphi)$ be a local chart on $\Xman$.
Since $\varphi$ is an embedding, we can define the induced measure $\varphi_{*}(\lambda)$ on $\Borel(U)$ by $\varphi_{*}(\lambda)(A) = \lambda(A)$ for all $A\in\Borel(U)$.
Then $\mu$ is \emph{Lebesgue absolute continuous} if for any local chart $(U,\varphi)$ the restriction of $\mu$ to $\Borel(U)$ is absolute continuous with respect to $\varphi_{*}(\lambda)$.
\end{remark}

Our second result shows that the set of maps $\Xmap$ for which~\eqref{mainform} holds is ``large''.
\begin{theorem}\label{th:setX_residual}
Let $\Xman$ be a smooth manifold of dimension $m$, $\mu$ be a measure on $\Borel(\Xman)$, $\Tmap:\Xman\to\Xman$ be a measurable $\mu$-invariant transformation.
Suppose $\mu$ is Lebesgue absolute continuous in the sense of Definition~\ref{defn:Leb_abs_cont}.
Let $\cmgset$ be the set of all maps $\Xmap\in C^{\infty}(\Xman,\RRR^n)$ for which
\begin{equation}\label{hKS_is_a_limit}
 \hKS_{\mu}(\Tmap) =  \lim_{d\to\infty} \lim_{k\to\infty}\,\frac{1}{k}\, \Hmu\left(\tk{k}{\Ppi{\Xmap}{\Tmap}{d}}\right)
\end{equation}
holds.
If $n>m$, then $\cmgset$ is {\bfseries residual} in $C^{\infty}(\Xman,\RRR^n)$ with respect to strong topology $C^{\infty}_{S}$, and {\bfseries prevalent} with respect to the weak topology $C^{\infty}_W$.
\end{theorem}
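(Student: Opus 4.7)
I will reduce Theorem~\ref{th:setX_residual} to Theorem~\ref{th:sigmaX_F_hKS_limPE} by showing that, for a residual (respectively prevalent) set of $\Xmap$, $\Xmap$ itself is injective off a $\mu$-null subset of $\Xman$; this alone forces $\sigma(\Xmap)\equcirc\Borel(\Xman)$, and hence \emph{a fortiori} $\sigma\bigl(\{\Xmap\circ\Tmap^k\}_{k\ge 0}\bigr)\equcirc\Borel(\Xman)$. A smooth manifold is second countable, locally compact and Hausdorff, so its one-point compactification is metrizable and hypothesis~(b) of Theorem~\ref{th:sigmaX_F_hKS_limPE} is automatic with $\FOmega=\Borel(\Xman)$, whence \eqref{hKS_is_a_limit} follows. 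The passage from almost-everywhere injectivity to the $\sigma$-algebra equivalence is a soft descriptive-set-theoretic step: if $\Xmap$ is Borel-injective on a full-measure Borel subset $\Xman_0\subset\Xman$, Kuratowski's theorem for injective Borel maps between standard Borel spaces shows that $\Xmap|_{\Xman_0}$ generates $\Borel(\Xman_0)$, yielding the required equivalence.

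\textbf{Residuality.} Write $\noninj{\Xmap}=\{(\omega,\omega')\in\Xman\times\Xman:\omega\neq\omega',\, \Xmap(\omega)=\Xmap(\omega')\}$, and let $\noninjext{\Xmap}\subset\Xman$ denote its projection on the first factor. Consider the codimension-$n$ submanifold
\[
\Sigma \;=\; \bigl\{(\omega,\omega',y,y')\in J^0_2(\Xman,\RRR^n) : y = y'\bigr\}
\]
of the $2$-fold $0$-jet multijet space. By the Multijet Transversality Theorem (Mather), the set of $\Xmap\in C^\infty(\Xman,\RRR^n)$ whose $2$-fold $0$-jet extension is transverse to $\Sigma$ is residual in $C^\infty_S$. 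For such $\Xmap$, $\noninj{\Xmap}$ is a smooth submanifold of $(\Xman\times\Xman)\setminus\Delta$ of dimension $2m-n$; since $n>m$ gives $2m-n<m$, its image $\noninjext{\Xmap}$ under the smooth first projection has measure zero in the sense of Definition~\ref{defn:zero_measure}, hence $\mu$-measure zero by Lebesgue absolute continuity of $\mu$.

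\textbf{Prevalence.} Fix a proper smooth embedding $\iota:\Xman\hookrightarrow\RRR^N$ (Whitney) and take as probe the $nN$-dimensional subspace $P = \{A\circ\iota : A\in\mathrm{Mat}_{n\times N}(\RRR)\}\subset C^\infty(\Xman,\RRR^n)$. For any fixed $\Xmap_0\in C^\infty(\Xman,\RRR^n)$, consider
\[
F:\bigl((\Xman\times\Xman)\setminus\Delta\bigr)\times P\to\RRR^n,\qquad F(\omega,\omega',A)=(\Xmap_0+A\iota)(\omega)-(\Xmap_0+A\iota)(\omega').
\]
Since $\iota$ is injective, the partial map $A\mapsto A\bigl(\iota(\omega)-\iota(\omega')\bigr)$ is surjective onto $\RRR^n$ whenever $\omega\neq\omega'$, so $F$ is a submersion. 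Then $F^{-1}(0)$ is a smooth manifold of dimension $2m+nN-n$, and Sard's theorem applied to its projection onto $P$ implies that for Lebesgue-a.e.\ $A\in P$ the fiber $\noninj{\Xmap_0+A\iota}$ is a submanifold of dimension $2m-n<m$. As above, $\mu\bigl(\noninjext{\Xmap_0+A\iota}\bigr)=0$ for a.e.\ $A\in P$, so $P$ is a probe for $\cmgset$ and Lemma~\ref{lm:prevalency_prop}(4) yields prevalence with respect to the weak topology.

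\textbf{Main obstacles.} The delicate points are the setup of multijet transversality on the non-compact manifold $(\Xman\times\Xman)\setminus\Delta$ together with the verification that a purely geometric dimension bound converts, via Lebesgue absolute continuity of $\mu$, into a genuine measure-theoretic statement $\mu(\noninjext{\Xmap})=0$; and, for prevalence, the requirement that the probe $P$ be an honest finite-dimensional topological subspace of $C^\infty_W(\Xman,\RRR^n)$ --- this is where properness of the Whitney embedding $\iota$ enters, since otherwise the maps $A\iota$ need not lie in $C^\infty_W$. Once these preparations are in place, both conclusions reduce to routine parametric transversality / Sard computations.
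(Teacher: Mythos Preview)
Your proposal is correct and follows the same overall strategy as the paper: exhibit a residual and prevalent set of $\Xmap$ whose non-injectivity locus has $\mu$-measure zero, deduce $\sigma(\Xmap)\equcirc\Borel(\Xman)$ via a Lusin--Souslin/Kuratowski argument (the paper's Theorem~\ref{th:non_inj}), and then invoke Theorem~\ref{th:sigmaX_F_hKS_limPE}(b). Your residuality argument via the multijet transversality theorem applied to $\Sigma\subset J^0_2(\Xman,\RRR^n)$ is essentially identical to the paper's Corollary~\ref{cor:noninjset}.

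The prevalence arguments differ. You take as probe the linear maps $A\iota$ for $A\in\mathrm{Mat}_{n\times N}(\RRR)$, where $\iota:\Xman\hookrightarrow\RRR^N$ is a Whitney embedding; injectivity of $\iota$ makes $A\mapsto A(\iota(\omega)-\iota(\omega'))$ surjective for $\omega\neq\omega'$, and a single parametric-transversality/Sard step finishes. The paper instead builds its probe as $\{L_v:\omega\mapsto G(\omega)v\mid v\in\RRR^k\}$ for $k\ge 2(m+n)$, where $G:\Xman\to M(n,k)$ is a smooth map with the property that the stacked matrix $\Phi(G(\omega),G(\omega'))$ has rank $2n$ whenever $\omega\neq\omega'$; the existence of such $G$ (Lemma~\ref{lm:constr_of_G}) is itself obtained by a second application of multijet transversality. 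Your route is shorter and more elementary, trading the auxiliary rank lemma for the off-the-shelf Whitney embedding; the paper's construction, on the other hand, is intrinsic in the sense that it does not import an ambient Euclidean space.

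One small correction: your remark that properness of $\iota$ is needed ``since otherwise the maps $A\iota$ need not lie in $C^\infty_W$'' is off target. The composition $A\iota$ is a smooth map $\Xman\to\RRR^n$ regardless of whether $\iota$ is proper, and any finite-dimensional linear subspace of $C^{\infty}(\Xman,\RRR^n)$ is automatically a topological subspace in the weak topology. Properness plays no role in the probe argument.
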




\section{Separation via $\sigma$-algebras}\label{s3}

\subsection{Properties of distribution functions}


Let $(\Xman, \FOmega, \mu)$ be a probability space and $\Xfunc:\Xman\to\RRR$ be a measurable function.
Let also $F:\RRR\to[0,1]$ be the distribution function of $\Xfunc$, i.e.
\begin{align*}
 F(\qq) &= \mu\{\omega \mid \Xfunc(\omega)\leq \qq  \} = \mu \bigl( \Xfunc^{-1}(-\infty,\qq] \bigr)
\end{align*}
It is well-known that $F$ is non-decreasing, right continuous, and that
\[
 \lim_{\qq\to-\infty} F(\qq) = 0, \qquad  \lim_{\qq\to+\infty} F(\qq) = 1.
\]
The latter justifies that $F$ can also be considered as a function from
$[-\infty,+\infty]$ into $[0,1]$.

For further considerations it will be convenient to keep in mind the following commutative diagram:
\begin{equation}\label{equ:X_F_FX}
 \xymatrix{
 \Xman \ar[rr]^{\Xfunc} \ar[dr]_{F\circ\Xfunc} && \RRR \ar[dl]^{F} \\
 & [0,1]}
\end{equation}
which implies that $F\circ\Xfunc$ is $\FOmega$-$\Borel(\RRR)$-measurable and so it holds $\sigma(F\circ\Xfunc)\subset\FOmega$.

For each $\qq\in\RRR$ define the following two elements of $[-\infty,\infty]$:
\[
 \qqm = \inf(F^{-1}F(\qq)),
 \qquad
 \qqp = \sup(F^{-1}F(\qq)).
\]

Moreover, let
\[
 \intr{\qq} = (-\infty,\qq).
\]

\begin{lemma}\label{lm:distr_func_prop}
Let $\qq\in\RRR$.
Then the following statements hold true.
\begin{enumerate}
\item[\rm(1)]
$F^{-1}F(\qq)$ coincides either with $[\qqm,\qqp)$ or with $[\qqm,\qqp]$.
\item[\rm(2)]
$F^{-1} F (\intr{\qqm}) = \intr{\qqm}$.
\item[\rm(3)]
If $\qqm<\qq$, then \[F^{-1}F(\intr{\qq}) = \intr{\qqm} \cup F^{-1}F(\qq),\] so
$F^{-1}F(\intr{\qq})$ either equals $(-\infty,\qqp)$ or $(-\infty,\qqp]$.
Moreover, let
\[
 Z = F^{-1}F(\intr{\qq}) \setminus \intr{\qq}.
\]
Then $\mu(\Xfunc^{-1}(Z))=0$.
\end{enumerate}
\end{lemma}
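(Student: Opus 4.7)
The plan is to exploit just two facts about the distribution function $F$: it is non-decreasing (so every level set $F^{-1}F(\qq)$ is an interval) and it is right-continuous. All three items then follow from elementary case analysis, the only bifurcation being whether $F$ has a jump at $\qqp$ or not.

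For (1), monotonicity forces $F^{-1}F(\qq)$ to be an interval: if $\qq_1,\qq_2\in F^{-1}F(\qq)$ and $\qq_1<c<\qq_2$, then $F(\qq)=F(\qq_1)\leq F(c)\leq F(\qq_2)=F(\qq)$. Right-continuity of $F$ at $\qqm$, applied to a sequence in the level set decreasing to $\qqm$, gives $F(\qqm)=F(\qq)$, so $\qqm$ itself lies in the level set. The analogous argument at $\qqp$ yields only the left-limit identity $F(\qqp^-)=F(\qq)$, while $F(\qqp)$ may strictly exceed $F(\qq)$, producing the two closure types $[\qqm,\qqp)$ and $[\qqm,\qqp]$.

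For (2), I would observe that any $c<\qqm$ satisfies $F(c)\leq F(\qqm)=F(\qq)$ by monotonicity, and the inequality must be strict, since equality $F(c)=F(\qq)$ would put $c$ in $F^{-1}F(\qq)$, contradicting the definition of $\qqm$ as its infimum. Thus the level set through any such $c$ itself lies in $(-\infty,\qqm)$, which gives $F^{-1}F(\intr{\qqm})\subseteq \intr{\qqm}$; the reverse inclusion is trivial.

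For (3), I would split $\intr{\qq}=\intr{\qqm}\cup[\qqm,\qq)$. Since $\qqm\leq\qq\leq\qqp$ and $[\qqm,\qqp)\subset F^{-1}F(\qq)$ by (1), $F$ is constant on $[\qqm,\qq)$ with value $F(\qq)$; combined with (2) this yields $F^{-1}F(\intr{\qq})=\intr{\qqm}\cup F^{-1}F(\qq)$, which by (1) equals either $(-\infty,\qqp)$ or $(-\infty,\qqp]$. The measure statement then reduces to computing, in each case, $\mu(\Xfunc^{-1}([\qq,\qqp)))$ or $\mu(\Xfunc^{-1}([\qq,\qqp]))$; the key point is that the constancy of $F$ on $[\qqm,\qq)$ forces $F(\qq^-)=F(\qq)$, while in the two cases we have $F(\qqp^-)=F(\qq)$ respectively $F(\qqp)=F(\qq)$, so the difference $F(\qqp^{\pm})-F(\qq^-)$ vanishes. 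I do not foresee a genuine obstacle; the only subtlety is to avoid conflating the two closure possibilities for $F^{-1}F(\qq)$, and to handle the boundary cases $\qqm=-\infty$ or $\qqp=+\infty$, which are covered by the extension of $F$ to $[-\infty,+\infty]$ noted before the lemma.
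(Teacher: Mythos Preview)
Your proposal is correct and follows essentially the same route as the paper's proof: monotonicity makes $F^{-1}F(\qq)$ an interval, right-continuity forces $\qqm$ to belong to it, the argument for (2) is the same contradiction with the definition of $\qqm$ as infimum, and (3) is obtained by the same decomposition $\intr{\qq}=\intr{\qqm}\cup[\qqm,\qq)$ together with the computation $F(\qqp^{\pm})-F(\qq^-)=0$ in the two cases. The only cosmetic difference is that the paper writes out the sequences $\{x_i\},\{y_i\}$ explicitly in (1) and the limit computations for $\mu(\Xfunc^{-1}(Z))$ in full, whereas you summarize these; your one-line remark about the degenerate cases $\qqm=-\infty$ or $\qqp=+\infty$ is a useful addition the paper leaves implicit.
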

\begin{proof}
(1)
Evidently, $F^{-1}F(\qq) \subset [\qqm,\qqp]$.

We will now prove that $[\qqm,\qqp) \subset  F^{-1}F(\qq)$.
By definition of infimum and supremum of the set $F^{-1}F(\qq)$ there are two sequences $\{x_i\}, \{y_i\} \subset F^{-1}F(\qq)$ such that
\[
\qqm \leq \cdots \leq x_{i+1} \leq x_{i} \leq \cdots \leq x_1 \leq y_1 \leq \cdots \leq y_i \leq y_{i+1} \leq \cdots \leq \qqp,
\]
$\lim\limits_{i \to\infty} x_i=\qqm$ and $\lim\limits_{i\to\infty} y_i=\qqp$.

Since $F$ is nondecreasing and $F(x_i)=F(y_i)=F(\qq)$ for all $i$, it follows that $F$ is constant on each segment $[x_i,y_i]$, and so
\[ (\qqm,\qqp) \ = \ \bigcup_{i}\,[x_i,y_i] \ \subset \ F^{-1}F(\qq).\]
Moreover, from right-continuity of $F$ we obtain that $F(\qqm) = \lim\limits_{i\to\infty} F(x_i)=F(\qq)$, hence $[\qqm,\qqp) \subseteq F^{-1}F(\qq)$.

\medskip

(2) The inclusion $\intr{\qqm} \subset F^{-1} F (\intr{\qqm})$ is evident.
Suppose that there exists some
\[t\ \in \ F^{-1} F (\intr{\qqm}) \, \setminus \, \intr{\qqm}.\]
This means that
\begin{itemize}
\item[(i)]
$t\geq\qqm$, and
 \item[(ii)]
$F(t) \in F(\intr{\qqm})$, i.e. $F(t)=F(s)$ for some $s < \qqm$,
\end{itemize}
Thus $s < \qqm \leq t$.
Since $F$ is non-decreasing,
\[ F(s) \ = \ F(\qqm) \stackrel{(1)}{=} F(a) \ = \ F(t),\]
that is $s\in F^{-1}F(\qq)$, and therefore $\qqm \leq s$, contradicting the assumption. Thus $F^{-1} F (\intr{\qqm}) = \intr{\qqm}$.

\medskip

(3)
Since $\qqm<\qq$, $F(\qqm)=F(\qq)$, and $F$ is non-decreasing, it follows that
\[
 F(\intr{\qq}) = F((-\infty,\qqm)) \cup F([\qqm,\qq)) = F(\intr{\qqm}) \cup \{ F(\qq) \},
\]
hence
\begin{align*}
 F^{-1}F(\intr{\qq}) \ &=  \ F^{-1}\bigl(F(\intr{\qqm})\, \cup \, \{F(\qq)\}\bigr)
 \\ &=F^{-1}F(\intr{\qqm}) \,\cup\, F^{-1}F(\qq)
 \ \stackrel{(2)}{=}\ \intr{\qqm} \,\cup\, F^{-1}F(\qq).
\end{align*}
It follows that $Z$ either equals $[\qq,\qqp]$ or $[\qq,\qqp)$.
Suppose $Z=[\qq,\qqp]$, then
\begin{align*}
\mu(\Xfunc^{-1}(Z))
&= \mu(\Xfunc^{-1}[\qq,\qqp]) = \mu(\Xfunc^{-1}(-\infty,\qqp]) - \mu(\Xfunc^{-1}(-\infty,\qq)) \\
&= F(\qqp) - \lim\limits_{\substack{t\to\qq \\ t\in(\qqm,\qq)}} \mu(\Xfunc^{-1}(-\infty,t]) \\
&= F(\qq) - \lim\limits_{\substack{t\to\qq \\ t\in(\qqm,\qq)}} F(t) = F(\qq) - \lim\limits_{\substack{t\to\qq \\ t\in(\qqm,\qq)}} F(\qq) = 0
\end{align*}

Now, let $Z=[\qq,\qqp)$.
Then similarly,
\begin{align*}
\mu(\Xfunc^{-1}(Z))
&= \mu(\Xfunc^{-1}[\qq,\qqp)) = \mu(\Xfunc^{-1}(-\infty,\qqp)) - \mu(\Xfunc^{-1}(-\infty,\qq)) \\
&= \lim\limits_{\substack{s\to\qqp \\ s\in(\qqm,\qqp)}} \mu(\Xfunc^{-1}(-\infty,s]) -
   \lim\limits_{\substack{t\to\qq \\ t\in(\qqm,\qq)}} \mu(\Xfunc^{-1}(-\infty,t]) \\
&= \lim\limits_{\substack{s\to\qqp \\ s\in(\qqm,\qqp)}} F(s) -
   \lim\limits_{\substack{t\to\qq \\ t\in(\qqm,\qq)}} F(t)
   = \lim\limits_{\substack{s\to\qqp \\ s\in(\qqm,\qqp)}} F(\qq) -
   \lim\limits_{\substack{t\to\qq \\ t\in(\qqm,\qq)}} F(\qq) = 0.
\end{align*}
Lemma is completed.
\end{proof}

\begin{lemma}\label{lm:sigmaX_sigma_FX}
$\sigma(F \circ \Xfunc) \equcirc \sigma(\Xfunc)$.
\end{lemma}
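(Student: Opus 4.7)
The inclusion $\sigma(F\circ\Xfunc)\subset\sigma(\Xfunc)$ is immediate from the commutativity of diagram~\eqref{equ:X_F_FX} and was already noted directly below it, so only the reverse relation $\sigma(\Xfunc)\subcirc\sigma(F\circ\Xfunc)$ needs argument. Concretely, I have to show that for every $B\in\Borel(\RRR)$ there is some $A\in\sigma(F\circ\Xfunc)$ with $\mu(\Xfunc^{-1}(B)\bigtriangleup A)=0$.

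My plan is a standard ``good sets'' reduction. Let $\mathcal{D}$ be the collection of those $B\in\Borel(\RRR)$ for which such an $A$ exists; since symmetric differences interact well with complementation and countable unions, and $\sigma(F\circ\Xfunc)$ is itself a $\sigma$-algebra, $\mathcal{D}$ is a $\sigma$-algebra on $\RRR$. Because the open rays $\intr{\qq}=(-\infty,\qq)$ for $\qq\in\RRR$ generate $\Borel(\RRR)$, it is enough to verify that every $\intr{\qq}$ belongs to $\mathcal{D}$.

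To do this I would fix $\qq$ and take $C=[0,F(\qq)]$, which is Borel in $[0,1]$, so that $(F\circ\Xfunc)^{-1}(C)=\Xfunc^{-1}(F^{-1}(C))$ automatically lies in $\sigma(F\circ\Xfunc)$. A short monotonicity argument based on Lemma~\ref{lm:distr_func_prop}(1) — whose two alternatives $F^{-1}F(\qq)=[\qqm,\qqp)$ and $[\qqm,\qqp]$ would be treated separately — identifies $F^{-1}(C)$ with $F^{-1}F(\intr{\qq})$. When $\qqm<\qq$, Lemma~\ref{lm:distr_func_prop}(3) then supplies a set $Z$ with $F^{-1}F(\intr{\qq})=\intr{\qq}\cup Z$ and $\mu(\Xfunc^{-1}(Z))=0$; when $\qqm=\qq$, Lemma~\ref{lm:distr_func_prop}(2) even gives $F^{-1}F(\intr{\qq})=\intr{\qq}$ outright. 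In either case $\Xfunc^{-1}(\intr{\qq})$ and $(F\circ\Xfunc)^{-1}(C)$ differ only by the $\mu$-null set $\Xfunc^{-1}(Z)$, so $\intr{\qq}\in\mathcal{D}$, and the lemma follows. The only spot requiring genuine care is the identification $F^{-1}([0,F(\qq)])=F^{-1}F(\intr{\qq})$, which comes down to reading off from Lemma~\ref{lm:distr_func_prop}(1) whether the endpoints $\qqm,\qqp$ lie in the level set $F^{-1}F(\qq)$; all the measure-zero content has already been isolated in Lemma~\ref{lm:distr_func_prop}(3), so beyond that the argument is pure bookkeeping.
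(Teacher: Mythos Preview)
Your overall strategy matches the paper's: reduce to the generating rays $\intr{\qq}$, produce a candidate set in $\sigma(F\circ\Xfunc)$, and control the symmetric difference via Lemma~\ref{lm:distr_func_prop}. The ``good sets'' $\sigma$-algebra $\mathcal{D}$ is a harmless extra layer. However, the specific candidate you choose, $C=[0,F(\qq)]$, does not work, and the identification $F^{-1}([0,F(\qq)])=F^{-1}F(\intr{\qq})$ that you announce is false in general. The point is that $F(\qq)$ belongs to the image $F(\intr{\qq})$ if and only if $\qqm<\qq$; when $\qqm=\qq$ one has $F^{-1}F(\intr{\qq})=\intr{\qq}$ by Lemma~\ref{lm:distr_func_prop}(2), whereas $F^{-1}([0,F(\qq)])=(-\infty,\qqp)$ or $(-\infty,\qqp]$, which is strictly larger whenever $\qq<\qqp$. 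The discrepancy is the level set $F^{-1}\{F(\qq)\}$, and its $\Xfunc$-preimage has $\mu$-measure exactly equal to the jump $F(\qq)-F(\qq^{-})$, which need not vanish. Concretely, take $\Xfunc\equiv 0$ $\mu$-a.e.\ and $\qq=0$: then $F(0)=1$, $C=[0,1]$, $(F\circ\Xfunc)^{-1}(C)=\Xman$, while $\Xfunc^{-1}(\intr{0})=\varnothing$, so the symmetric difference has measure $1$.

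The paper avoids this by taking as its Borel set not $[0,F(\qq)]$ but the \emph{image} $F(\intr{\qq})$ itself, which is an interval (hence Borel) because $F$ is monotone; with that choice the candidate is $Q_{\qq}=(F\circ\Xfunc)^{-1}\bigl(F(\intr{\qq})\bigr)=\Xfunc^{-1}F^{-1}F(\intr{\qq})$, and then Lemma~\ref{lm:distr_func_prop}(2)--(3) apply directly. Your argument is repaired simply by replacing $C=[0,F(\qq)]$ with $C=F(\intr{\qq})$; after that change, the case split and the appeal to Lemma~\ref{lm:distr_func_prop} go through exactly as you outlined.
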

\begin{proof}
It is easy to see that $\sigma(F \circ \Xfunc) \subset \sigma(\Xfunc)$.
Indeed, let $A\in\sigma(F \circ \Xfunc)$, so
\[
 A = (F \circ \Xfunc)^{-1}(B) = \Xfunc^{-1} F^{-1}(B)
\]
for some $B\in\Borel([0,1])$.
But $F^{-1}(B) \in \Borel(\RRR)$, hence $A\in\sigma(\Xfunc)$.

Now we will show that $\sigma(F \circ \Xfunc) \supcirc \sigma(\Xfunc)$.
For each $\qq\in\RRR$ let
\[
 P_{\qq} = \Xfunc^{-1}(\intr{\qq}).
\]
Then $\sigma(\Xfunc)$ is generated by the sets $P_{\qq}$, so it suffices to prove that for each $\qq\in\RRR$ there exists some $Q_{\qq}\in\sigma(F \circ \Xfunc)$ such that $\mu (Q_{\qq} \bigtriangleup P_{\qq})= 0.$

In fact we will put
\[
 Q_{\qq} = \Xfunc^{-1} F^{-1}F(\intr{\qq}) = (F\circ \Xfunc)^{-1} F(\intr{\qq}).
\]
Since $F$ is non-decreasing $F(\intr{\qqm})$ is a Borel subset of $[0,1]$, hence $Q_{\qqm}\in\sigma(F \circ \Xfunc)$.
So it remains to show that $\mu (Q_{\qq} \bigtriangleup P_{\qq})= 0$ for each $\qq\in\RRR$.

First suppose $\qq=\qqm$.
Then by (2) of Lemma~\ref{lm:distr_func_prop}
\[
P_{\qq} = P_{\qqm} =\Xfunc^{-1} (\intr{\qqm})
\ \stackrel{(2)}{=\!=} \
\Xfunc^{-1} F^{-1}F(\intr{\qqm}) \ = \ Q_{\qqm} = Q_{\qq},
\]
hence $Q_{\qq} \bigtriangleup P_{\qq}=\varnothing$, and so $\mu (Q_{\qq} \bigtriangleup P_{\qq})= 0$.

Now suppose $\qqm<\qq$. Then for $Z = F^{-1}F(\intr{\qq}) \setminus \intr{\qq}$ it holds
\[
 Q_{\qq} = \Xfunc^{-1} F^{-1}F(\intr{\qq}) = \Xfunc^{-1}(\intr{\qq}) \cup  \Xfunc^{-1}(Z) = P_{\qq} \cup \Xfunc^{-1}(Z),
\]
Therefore, by (4)
\[
 \mu (Q_{\qq} \bigtriangleup P_{\qq}) = \mu (Q_{\qq} \setminus P_{\qq}) = \mu (\Xfunc^{-1}(Z)) = 0.
\]
The Lemma is proved.
\end{proof}

\subsection{Ergodic properties.}
Let $\Tmap:\Xman\to\Xman$ be a measurable map.
Define the function $\Id{d}:\Xman\to\RRR$ by
\[
 \Id{d}(\omega) =  \# \{ r=1,\ldots,d-1 \mid \Xfunc\circ\Tmap^r(\omega) \leq \Xfunc(\omega) \}.
\]
So $\Id{d}(\omega)$ is the number of points among the first $d-1$ points of the $\Tmap$-orbit of $\omega$ at which $\Xfunc$ takes values not greater than $\Xfunc(\omega)$.
\begin{lemma}\label{lm:FX_lim_Id}
If $\Tmap$ is ergodic and $\mu$-preserving, then
\begin{equation}\label{equ:FX_lim_Id}
 F\circ\Xfunc(\omega) = \mathop{\mathrm{a.e.lim}}\limits_{d\to\infty} \frac{\Id{d}(\omega)}{d}
\end{equation}
\end{lemma}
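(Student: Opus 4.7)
The plan is to interpret $\Id{d}(\omega)/d$ as an ergodic average and then carefully replace a fixed threshold by the random threshold $\Xfunc(\omega)$.

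First I would rewrite
\[
 \Id{d}(\omega) + 1 \ = \ \#\{r=0,1,\ldots,d-1 \mid \Xfunc\circ\Tmap^r(\omega) \leq \Xfunc(\omega)\} \ = \ \sum_{r=0}^{d-1} \mathbf{1}_{A_{\Xfunc(\omega)}}(\Tmap^{r}\omega),
\]
where $A_{t}=\{\eta\in\Xman \mid \Xfunc(\eta)\leq t\}$, so $\mu(A_t)=F(t)$. For fixed $t\in\RRR$, $A_t\in\FOmega$ and the Birkhoff ergodic theorem (using that $\Tmap$ is ergodic and $\mu$-preserving) gives
\[
 S_d(\omega,t) \ := \ \frac{1}{d}\sum_{r=0}^{d-1} \mathbf{1}_{A_{t}}(\Tmap^{r}\omega) \ \xrightarrow[d\to\infty]{}\ F(t)
\]
off a $\mu$-null set $N_t$. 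The difficulty is that the threshold we really need is $t=\Xfunc(\omega)$, which varies with $\omega$, so Birkhoff does not apply directly; the resolution is a monotone sandwich together with a separate treatment of the (at most countably many) atoms of the distribution of $\Xfunc$.

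Fix a countable dense set $Q\subset\RRR$ and let $J=\{c_1,c_2,\ldots\}$ be the set of discontinuity points of $F$; for each $c_i$ set $B_i=\Xfunc^{-1}(c_i)$, so that $\mu(B_i)=F(c_i)-F(c_i^{-})$. Apply Birkhoff to each of the countably many sets $A_t$ ($t\in Q$), $\{\Xfunc<c_i\}$ and $B_i$, and let $N$ be the union of the corresponding null sets. For $\omega\notin N$ put $c=\Xfunc(\omega)$ and split into two cases. If $c\notin J$, then $F$ is continuous at $c$; using $Q$-sequences $t_k\uparrow c$ and $s_k\downarrow c$ together with the monotonicity $S_d(\omega,t_k)\leq S_d(\omega,c)\leq S_d(\omega,s_k)$ yields
\[
 F(c^{-}) \ \leq\ \liminf_{d\to\infty} S_d(\omega,c) \ \leq\ \limsup_{d\to\infty} S_d(\omega,c) \ \leq\ F(c),
\]
and both bounds equal $F(c)$ by right-continuity together with continuity at $c$. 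If $c=c_i\in J$, then $\omega\in B_i$ and I would decompose
\[
 S_d(\omega,c_i) \ =\ \frac{1}{d}\sum_{r=0}^{d-1} \mathbf{1}_{\{\Xfunc<c_i\}}(\Tmap^{r}\omega) \ +\ \frac{1}{d}\sum_{r=0}^{d-1} \mathbf{1}_{B_i}(\Tmap^{r}\omega),
\]
whose two Birkhoff limits are $\mu(\{\Xfunc<c_i\})=F(c_i^{-})$ and $\mu(B_i)=F(c_i)-F(c_i^{-})$, summing to $F(c_i)$.

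Combining the two cases, $S_d(\omega,\Xfunc(\omega))\to F(\Xfunc(\omega))$ for all $\omega\notin N$, and since $\Id{d}(\omega)/d = S_d(\omega,\Xfunc(\omega)) - 1/d$, equation~\eqref{equ:FX_lim_Id} follows. The main (and only real) obstacle is the atoms of $F$: the squeeze argument by itself gives only $F(c^-)\leq\liminf\leq\limsup\leq F(c)$, and the jump has to be recovered by applying Birkhoff separately to the positive-measure set $\Xfunc^{-1}(c_i)$ — this is precisely where we use that $J$ is countable so that the exceptional null sets stay negligible in their union.
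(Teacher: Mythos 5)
Your proof is correct and follows essentially the same route as the paper: apply Birkhoff to the level sets $\Xfunc^{-1}(-\infty,t]$ for $t$ in a countable dense set augmented by the discontinuity points of $F$, then recover the random threshold $t=\Xfunc(\omega)$ by a monotone squeeze at continuity points and a direct argument at atoms. The only (cosmetic) difference is that at an atom $c_i$ you split the ergodic average over $\{\Xfunc<c_i\}$ and $\Xfunc^{-1}(c_i)$, whereas the paper simply puts $c_i$ into the countable set and applies Birkhoff to $\Xfunc^{-1}(-\infty,c_i]$ directly.
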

\begin{proof}
For each $\qq\in\RRR$ consider the following set
\[
 \sublev{\qq} = \Xfunc^{-1}(-\infty,\qq].
\]
Then by definition
\[
 F(\qq) = \mu (\sublev{\qq}) = \mu( \omega \in\Xman \mid \Xfunc(\omega)\leq\qq ).
\]

Moreover, as $\Tmap$ is ergodic, it follows from Birkhoff's Ergodic Theorem that there exists a subset $\Xman_{\qq} \subset\Xman$ such that $\mu(\Xman_{\qq})=1$, and for each $\bar \omega\in\Xman_{\qq}$
\begin{align*}
 \mu (\sublev{\qq})
 &= \lim_{d\to\infty} \frac{1}{d} \, \# \{r<d \mid \Tmap^r(\bar\omega) \in \sublev{\qq} \} \\
 &= \lim_{d\to\infty} \frac{1}{d} \, \# \{r<d \mid \Xfunc\circ\Tmap^r(\bar\omega) \leq \qq \}.
\end{align*}

Take any countable dense subset $S \subset\RRR$ containing all points of discontinuity of $F$ and let
\[ \bar\Xman = \mathop{\bigcap}\limits_{\qq\in S} \Xman_{\qq}. \]
Then $\mu(\bar\Xman)=1$ as well, and for each $\qq\in S$ and $\bar\omega\in\bar\Xman$
\begin{gather*}
 \mu (\sublev{\qq}) = \lim_{d\to\infty} \frac{1}{d} \, \# \{r<d \mid \Xfunc\circ\Tmap^r(\bar\omega) \leq \qq \}.
\end{gather*}
In particular, if $\bar\omega\in\bar\Xman$ is such that $\qq=\Xfunc(\bar\omega)\in S$, then
\begin{align*}
 F(\Xfunc(\bar\omega)) &= F(\qq) = \mu (\sublev{\qq}) = \mu (\sublev{\Xfunc(\bar\omega)}) = \\
 &= \lim_{d\to\infty} \frac{1}{d} \, \# \{r<d \mid \Xfunc\circ\Tmap^r(\bar\omega) \leq \Xfunc(\bar\omega) \}
 = \lim_{d\to\infty} \frac{\Id{d}(\bar\omega)}{d}.
\end{align*}
Thus, $\{\Id{d}\}$ converges to $F\circ\Xfunc$ on the set $\Xfunc^{-1}(S) \cap \bar\Xman$.
We will prove that in fact this sequence converges to $F\circ\Xfunc$ on $\bar\Xman$.

Let $\bar\omega\in\bar\Xman$ be such that $\qq=\Xfunc(\bar\omega) \in \RRR\setminus S$.
Then $F$ is continuous at $\qq$.

Choose two sequences
\[
 \{\bb_i\}_{i\in\NNN} \ \subset \ S\cap(-\infty, \qq),
\qquad
 \{\cc_i\}_{i\in\NNN} \ \subset \ S\cap(\qq,+\infty)
\]
converging to $\qq$.
Then by construction of $\bar\Xman$ for each $i\in\NNN$ we have that
\begin{gather*}
 F(\bb_i) = \mu (\sublev{\bb_i})  = \lim_{d\to\infty} \frac{1}{d} \, \# \{r < d \mid \Xfunc\circ\Tmap^r(\bar\omega) \leq \bb_i \}, \\
 F(\cc_i) = \mu (\sublev{\cc_i})  = \lim_{d\to\infty} \frac{1}{d} \, \# \{r < d \mid \Xfunc\circ\Tmap^r(\bar\omega) \leq \cc_i \}.
\end{gather*}
Since $\bb_i < \qq < \cc_i$, we see that
\[
 \# \{r < d \mid \Xfunc\circ\Tmap^r(\bar\omega) \leq \bb_i \}
  \ \ \leq \ \ \Id{d}(\bar\omega)  \ \ \leq \ \ \# \{r < d \mid \Xfunc\circ\Tmap^r(\bar\omega) \leq \cc_i \}.
\]
Hence
\[
F(\qq) = \lim_{i\to\infty} F(\bb_i) \leq
\varliminf_{i\to\infty}  \Id{d}(\bar\omega) \leq
\varlimsup_{i\to\infty}  \Id{d}(\bar\omega)
\leq \lim_{i\to\infty} F(\cc_i) = F(\qq).
\]
Thus $\lim\limits_{d\to\infty} \Id{d}(\bar\omega)$ exists and coincides with $F(\qq)=F(\Xfunc(\bar\omega))$, which proves the lemma.
\end{proof}

\begin{corollary}\label{cor:sigmaX_PdXT}
Let $(\Xman, \FOmega, \mu)$ be a probability space and $\Tmap:\Xman\to\Xman$ and \[\Xmap=(\Xfunc_1,\ldots,\Xfunc_n):\Xman\to\RRR^n\] be measurable maps.
If $\Tmap$ is ergodic and $\mu$-preserving, then
\[\sigma(\Xmap) \ \subcirc \ \SAXmapT.\]
\end{corollary}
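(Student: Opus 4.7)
\emph{Plan.} The argument proceeds coordinatewise, reducing the multivariate statement to a single-variable assertion, and then identifying $F\circ\Xfunc$ (where $F$ is the distribution function of $\Xfunc$) as a pointwise a.e.\ limit of simple functions that are manifestly $\SAXfuncT$-measurable.

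First I would reduce to the scalar case. The $\sigma$-algebra $\sigma(\Xmap)$ is generated by $\bigcup_{i=1}^{n}\sigma(\Xfunc_i)$, while $\SAXmapT$ is generated by $\bigcup_{i=1}^{n}\SAXfunciT$. Since null sets form a $\sigma$-ideal, the collection of $A\in\FOmega$ that coincide modulo a $\mu$-null set with some element of $\SAXmapT$ is itself a sub-$\sigma$-algebra of $\FOmega$; consequently it suffices to establish $\sigma(\Xfunc)\subcirc\SAXfuncT$ for an individual measurable $\Xfunc:\Xman\to\RRR$ and then assemble the coordinates. Using Lemma~\ref{lm:sigmaX_sigma_FX} I can replace $\sigma(\Xfunc)$ by $\sigma(F\circ\Xfunc)$, so the goal becomes $\sigma(F\circ\Xfunc)\subcirc\SAXfuncT$.

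Next I would exhibit $F\circ\Xfunc$ as an almost-sure limit of $\SAXfuncT$-measurable simple functions. By~\eqref{equ:Pd_via_Rd}, for each $r\in\{1,\dots,d-1\}$ the set $\{\omega:\Xfunc\circ\Tmap^r(\omega)\le\Xfunc(\omega)\}=R_{d-1}^{r,0}$ is a union of atoms of $\PdXT{d-1}$, so
\[
  \Id{d} \;=\; \sum_{r=1}^{d-1}\mathbf{1}_{\{\Xfunc\circ\Tmap^r\le\Xfunc\}}
\]
is $\sigma(\PdXT{d-1})$-measurable, hence $\SAXfuncT$-measurable. Lemma~\ref{lm:FX_lim_Id} then gives $F\circ\Xfunc=\lim_{d\to\infty}\Id{d}/d$ almost surely; and because a pointwise a.e.\ limit of measurable functions is measurable after modification on a $\mu$-null set, we obtain $\sigma(F\circ\Xfunc)\subcirc\SAXfuncT$, and hence $\sigma(\Xfunc)\subcirc\SAXfuncT$.

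The only place where ergodicity of $\Tmap$ enters is in invoking Lemma~\ref{lm:FX_lim_Id}; the rest is formal bookkeeping with $\sigma$-algebras modulo null sets. Consequently the main conceptual step, and the only one requiring genuine input beyond this bookkeeping, is the Birkhoff-type identification of $F\circ\Xfunc$ as the limit of the ordinal-algebra-measurable counters $\Id{d}/d$, already delivered by the preceding lemmas; no serious obstacle arises.
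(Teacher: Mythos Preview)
Your argument is correct and follows essentially the same route as the paper: reduce to a single coordinate, use Lemma~\ref{lm:sigmaX_sigma_FX} to replace $\sigma(\Xfunc)$ by $\sigma(F\circ\Xfunc)$, observe that each $\Id{d}$ is $\SAXfuncT$-measurable, and invoke Lemma~\ref{lm:FX_lim_Id} to conclude. Your version is slightly more explicit (spelling out why $\Id{d}$ is $\SAXfuncT$-measurable via the sets $R_{d-1}^{r,0}$ and justifying the reduction to coordinates through the $\sigma$-ideal of null sets), but the structure is identical to the paper's.
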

\begin{proof}
Suppose $n=1$, so $\Theta=\Xfunc:\Xman\to\RRR$ is a function.
Then by Lemma~\ref{lm:sigmaX_sigma_FX} $\sigma(\Xfunc) \equcirc \sigma(F\circ\Xfunc)$.
Notice that $\Id{d}$ is $\SAXfuncT$-$\Borel([0,1])$-measurable for each $d$ and by Lemma~\ref{lm:FX_lim_Id} the sequence $\{\Id{d}\}$ converges a.e.\! to $F\circ\Xfunc$.
Hence $F\circ\Xfunc$ is $\FOmega$-$\Borel([0,1])$-measurable as well.
This means that $\sigma(\Xfunc) \equcirc \sigma(F\circ\Xfunc) \subcirc \SAXfuncT$.

If $n\geq2$, then for each $i=1,\ldots,n$ we have the inclusion of $\sigma$-algebras:
\[\sigma(\Xfunc_i) \ \subcirc \ \sgmalg{\Xfunc_i}{\Tmap}.\]
Since $\SAXmapT$ is generated by $\SAXfunciT$ for all $i=1,\ldots,n$, we see that
$\sigma(\Xmap) \ \subcirc \ \SAXmapT$.
\end{proof}

\begin{corollary}\label{cor:P_XT_T__PX_T}
Let $(\Xman, \FOmega, \mu)$ be a probability space, $\Tmap:\Xman\to\Xman$ and $\Xmap=(\Xfunc_1,\ldots,\Xfunc_n):\Xman\to\RRR^n$ be measurable maps.
If $\Tmap$ is ergodic and $\mu$-preserving, then
\begin{equation}\label{equ:sigma_XTk__OP}
\sigma \bigl(\{\Xmap\circ\Tmap^k\}_{k\geq0}\bigr) \equcirc\SAXmapT.
\end{equation}
\end{corollary}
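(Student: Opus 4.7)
The plan is to reduce the remaining direction of the equivalence to Corollary \ref{cor:sigmaX_PdXT} applied separately to each shifted observable $\Xmap\circ\Tmap^k$. Lemma \ref{lm:SigmaXT_sigmaXT} already provides the honest inclusion $\SAXmapT\subset\sigma\bigl(\{\Xmap\circ\Tmap^k\}_{k\geq 0}\bigr)$, so only the reverse relation modulo $\mu$ needs to be verified, namely
\begin{equation*}
\sigma\bigl(\{\Xmap\circ\Tmap^k\}_{k\geq 0}\bigr)\ \subcirc\ \SAXmapT.
\end{equation*}
Because the $\mu$-completion of $\SAXmapT$ within $\FOmega$ is itself a sub-$\sigma$-algebra of $\FOmega$, this reduces, by countable closure, to $\sigma(\Xmap\circ\Tmap^k)\subcirc\SAXmapT$ for every fixed $k\geq 0$, so I will concentrate on this one-step statement.

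For fixed $k\geq 0$, I would apply Corollary \ref{cor:sigmaX_PdXT} to the measurable map $\Xmap\circ\Tmap^k\colon\Xman\to\RRR^n$ and to the same ergodic, $\mu$-preserving transformation $\Tmap$. This immediately yields
\begin{equation*}
\sigma(\Xmap\circ\Tmap^k)\ \subcirc\ \sgmalg{\Xmap\circ\Tmap^k}{\Tmap},
\end{equation*}
so it remains to establish the honest inclusion $\sgmalg{\Xmap\circ\Tmap^k}{\Tmap}\subset\SAXmapT$. Since $\sgmalg{\Xmap\circ\Tmap^k}{\Tmap}$ is generated by the partitions $\Ppi{\Xmap\circ\Tmap^k}{\Tmap}{d}=\bigvee_{i=1}^n\Ppi{\Xfunc_i\circ\Tmap^k}{\Tmap}{d}$, it suffices to check, for each scalar component and every $d\geq 1$,
\begin{equation*}
\Ppi{\Xfunc_i\circ\Tmap^k}{\Tmap}{d}\ \prec\ \Ppi{\Xfunc_i}{\Tmap}{k+d},
\end{equation*}
since this will give $\sigma\bigl(\Ppi{\Xfunc_i\circ\Tmap^k}{\Tmap}{d}\bigr)\subset\SAXfunciT\subset\SAXmapT$ and close the chain.

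The combinatorial coarsening above is the step I expect to demand the most care, and it is the only place where the tie-breaking convention in the definition of $\Opi{\pi}$ enters. For any $\omega\in\Xman$, the atom of $\Ppi{\Xfunc_i}{\Tmap}{k+d}$ containing $\omega$ is specified by the unique permutation $\pi$ of $\{0,1,\ldots,k+d\}$ which sorts the vector $\bigl(\Xfunc_i(\omega),\Xfunc_i(\Tmap\omega),\ldots,\Xfunc_i(\Tmap^{k+d}\omega)\bigr)$ in decreasing order with ties broken by decreasing index. Restricting $\pi$ to the positions of $\{k,k+1,\ldots,k+d\}$ and subtracting $k$ from these entries yields a permutation of $\{0,\ldots,d\}$ which, because the shift $j\mapsto k+j$ is order-preserving and therefore respects the tie-breaking rule, is exactly the sorting permutation of the tail $\bigl(\Xfunc_i(\Tmap^{k}\omega),\ldots,\Xfunc_i(\Tmap^{k+d}\omega)\bigr)$, i.e.\ the atom of $\Ppi{\Xfunc_i\circ\Tmap^k}{\Tmap}{d}$ that contains $\omega$. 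Thus the shifted atom is a function of the unshifted longer one, which is exactly the coarsening relation claimed. Everything else in the argument is routine bookkeeping.
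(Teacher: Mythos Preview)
Your proof is correct and follows essentially the same route as the paper: both use Lemma~\ref{lm:SigmaXT_sigmaXT} for one inclusion, apply Corollary~\ref{cor:sigmaX_PdXT} to the shifted observable $\Xmap\circ\Tmap^k$, and then reduce everything to the honest inclusion $\sgmalg{\Xfunc_i\circ\Tmap^k}{\Tmap}\subset\SAXfunciT$ via a combinatorial coarsening of ordinal partitions. The only cosmetic difference is that the paper proves the one-step refinement $\Pd{\Xfunc\circ\Tmap}{\Tmap}{d}\prec\PdXT{d+1}$ by explicitly listing the $d+2$ permutations $\alpha_0,\ldots,\alpha_{d+1}$ obtained by inserting the index $0$, leaving the general $k$ to an implicit induction, whereas you jump directly to $\Pd{\Xfunc_i\circ\Tmap^k}{\Tmap}{d}\prec\Pd{\Xfunc_i}{\Tmap}{k+d}$ via the abstract ``restrict the sorting permutation to the tail indices'' argument; these are equivalent and your observation that the shift $j\mapsto k+j$ preserves the tie-breaking rule is exactly what makes it work.
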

\begin{proof}
Since $\SAXmapT\subset \sigma \bigl(\{\Xmap\circ\Tmap^k\}_{k\geq0}\bigr)$, see Lemma~\ref{lm:SigmaXT_sigmaXT}, it suffices to show that
\begin{equation}\label{equ:OPXTk_OPXT}
\sigma \bigl(\{\Xmap\circ\Tmap^k\}_{k\geq0}\bigr) \subcirc\SAXmapT.
\end{equation}
As in the proof of Corollary~\ref{cor:sigmaX_PdXT}, we can restrict to the case $n=1$ with $\Theta=\Xfunc:\Xman\to\RRR$, because
$\SAXmapT$ is generated by $\SAXfunciT, i=1,\ldots,n$.

We will show that
\begin{equation}\label{equ:OPXTk_OPXTxi}
 \sgmalg{\Xfunc\circ\Tmap^k}{\Tmap}\, \subset\, \SAXfuncT, \qquad k\geq1.
\end{equation}
Then by Corollary~\ref{cor:sigmaX_PdXT} we get the inclusions
\[\sigma(\Xfunc\circ\Tmap^k) \ \ \subcirc \ \ \sgmalg{\Xfunc\circ\Tmap^k}{\Tmap} \ \ \stackrel{\eqref{equ:OPXTk_OPXTxi}}{\subset} \ \ \SAXfuncT,\]
which imply~\eqref{equ:OPXTk_OPXT} with $\Theta=\Xfunc:\Xman\to\RRR$.

For the proof of~\eqref{equ:OPXTk_OPXTxi} it is sufficient to show that partition $\PdXT{d+1}$ is finer than $\Pd{\Xfunc\circ\Tmap}{\Tmap}{d}$ for all $d\geq1$:
\[
 \Pd{\Xfunc\circ\Tmap}{\Tmap}{d} \ \prec \ \, \PdXT{d+1}, \qquad d\geq1.
\]

Let $\pi=(i_0,\ldots,i_{d})$ be a permutation of the set $\{0,\ldots,d\}$ and $\Ppi{\Xfunc\circ\Tmap}{\Tmap}{\pi}$ be the corresponding element of partition $\Pd{\Xfunc\circ\Tmap}{\Tmap}{d}$, so $\PpiXT{\pi}$ consists of all $\omega\in\Xman$ such that
\begin{equation*}
\Xfunc\circ\Tmap\circ\Tmap^{i_0}(\omega) \ \geq \ \Xfunc\circ\Tmap\circ\Tmap^{i_1}(\omega) \ \geq \ \cdots \ \geq \ \Xfunc\circ\Tmap\circ\Tmap^{i_{d}}(\omega),
\end{equation*}
and if $\Xfunc\circ\Tmap\circ\Tmap^{i_{\tau}}(\omega)=\Xfunc\circ\Tmap\circ\Tmap^{i_{\tau+1}}(\omega)$, then $i_{\tau}>i_{\tau+1}$.

In other words, $\omega\in\Ppi{\Xfunc\circ\Tmap}{\Tmap}{\pi}$ if and only if
\begin{equation}\label{equ:cond_on_P_XT_T_pi}
\Xfunc\circ\Tmap^{i_0+1}(\omega) \ \geq \ \Xfunc\circ\Tmap^{i_1+1}(\omega) \ \geq \ \cdots \ \geq \ \Xfunc\circ\Tmap^{i_{d}+1}(\omega),
\end{equation}
and whenever $\Xfunc\circ\Tmap^{i_{\tau}+1}(\omega)=\Xfunc\circ\Tmap^{i_{\tau+1}+1}(\omega)$, then $i_{\tau}>i_{\tau+1}$ for $\tau\in\{0,\ldots,d-1\}$.

Consider the following permutations of the set $\{0,\ldots,d+1\}$:
\begin{gather*}
 \alpha_0 = (0,\, i_0+1,\, i_1+1,\, \ldots,\, i_{d}+1), \\
 \alpha_1 = (i_0+1,\, 0,\, i_1+1,\, \ldots,\, i_{d}+1), \\
 \cdots  \cdots \cdots\cdots \\
 \alpha_{d+1} = (i_0+1,\, i_1+1,\, \ldots,\, i_{d}+1,\, 0),
\end{gather*}

We claim that
\begin{equation}\label{equ:PXT_T_pi_decomposition}
 \Ppi{\Xfunc\circ\Tmap}{\Tmap}{\pi} = \Ppi{\Xfunc}{\Tmap}{\alpha_0} \cup \Ppi{\Xfunc}{\Tmap}{\alpha_1} \cup \cdots \cup \Ppi{\Xfunc}{\Tmap}{\alpha_{d+1}}.
\end{equation}
which will prove that the partition $\PdXT{d+1}$ is finer than $\Pd{\Xfunc\circ\Tmap}{\Tmap}{d}$.

Evidently, for all $\omega \in \mathop{\bigcup}\limits_{j=0}^{d+1}\Ppi{\Xfunc}{\Tmap}{\alpha_j}$ condition~\eqref{equ:cond_on_P_XT_T_pi} holds true, that is  $\omega\in\Ppi{\Xfunc\circ\Tmap}{\Tmap}{\pi}$.

Conversely, let $\omega\in\Ppi{\Xfunc\circ\Tmap}{\Tmap}{\pi}$.
If $\Xfunc(\omega) > \Xfunc\circ\Tmap^{i_{0}+1}(\omega)$, then $\omega\in\Ppi{\Xfunc}{\Tmap}{\alpha_0}$.
Otherwise, let $\tau = \max\{ b\in\{0,\ldots,d\}  \mid \Xfunc\circ\Tmap^{i_{b}+1}(\omega)\geq \Xfunc(\omega) \}.$
Then $\omega\in\Ppi{\Xfunc}{\Tmap}{\alpha_{\tau}}$.
This completes the proof of~\eqref{equ:OPXTk_OPXT}.
\end{proof}

\subsection{Proof of Theorem~\ref{th:sigmaX_F_hKS_limPE}}\label{sect:proof_th:sigmaX_F_hKS_limPE}
Let $(\Xman, \FOmega, \mu)$ be a probability space, $\Tmap:\Xman\to\Xman$ be a measurable $\mu$-invariant transformation, and $\Xmap:\Xman\to\RRR^n$ be a measurable map such that $\sigma \bigl(\{\Xmap\circ\Tmap^k\}_{k\geq0}\bigr) \equcirc \FOmega$.
We have to prove that
\begin{equation}\label{equ:Hmu_limPE}
 \hKS_{\mu}(\Tmap) =  \lim_{d\to\infty} \lim_{k\to\infty}\,\frac{1}{k}\, \Hmu\left(\tk{k}{\Ppi{\Xmap}{\Tmap}{d}}\right)
\end{equation}
if either
\begin{enumerate}
 \item[\rm(a)] $\Tmap$ is ergodic, or
 \item[\rm(b)] $\Tmap$ is not ergodic, however $\Xman$ can be embedded into some compact metrizable space so that $\FOmega=\Borel(\Xman)$.
\end{enumerate}
In the case (a) it follows from Corollary~\ref{cor:P_XT_T__PX_T} and the assumptions above that
\[\SAXmapT \equcirc \FOmega,\]
which by Lemma~\ref{lm:hKS_limd} implies~\eqref{equ:Hmu_limPE}.

In the case (b) the equality~\eqref{equ:Hmu_limPE} follows from the Ergodic decomposition theorem by the arguments of the proof of~\cite[Theorem~2.1]{Keller:DCDS:2012}.
\qed

\section{Residuality and prevalence}\label{s4}

\subsection{The set of non-injectivity}
Let $\Xmap:\Xman\to\Yman$ be a continuous map between topological spaces $\Xman$ and $\Yman$.
Suppose also that $\mu$ is a measure on the $\sigma$-algebra $\Borel(\Xman)$ of Borel sets of $\Xman$.
In this section we give sufficient conditions on $\Xmap$ for the equvalence $\sigma(\Xmap) \equcirc \Borel(\Xman)$ and also prove Theorem~\ref{th:setX_residual}.

The subset
\begin{equation}\label{equ:D_defn}
\noninj{\Xmap} = \{ \, \omega \in \Xman \mid \Xmap^{-1}\Xmap(\omega) \not=\{\omega\} \, \} 
\end{equation}
of $\Xman$ is called \emph{the set of non-injectivity} of $\Xmap$.
It plays a principal role in the further considerations.

We will now present a more useful description of $\noninj{\Xmap}$.
Given a set $\Xman$, a number $s\geq 1$, and map $\Xmap:\Xman\to \Yman$ into some set $\Yman$ put
\[
 \Xman^s = \underbrace{\Xman \times \cdots \times \Xman}_{s}\,,
 \qquad
 \Delta \Xman^s = \{(\omega,\ldots,\omega) \in \Xman^s \mid \omega \in \Xman \},
\]
\[
 \Xmap^s=\underbrace{\Xmap\times\cdots\times \Xmap}_{s}:\Xman^s\to \Yman^s,
 \qquad
 \Xmap^s(\omega_1,\ldots,\omega_s) = (\Xmap(\omega_1),\ldots,\Xmap(\omega_s)).
\]
%
%
In particular, for $s=2$ consider the following subset of $\Omega^2$:
\begin{equation}\label{equ:symm_noninj}
 \C  \ = \ (\Xmap^2)^{-1}(\Delta\Yman^2) \, \setminus \, \Delta\Xman^2 \ = \ \{ (\omega,\omega') \mid \omega\not=\omega', \Xmap(\omega)= \Xmap(\omega') \}.
\end{equation}
Let also $p:\Omega^2\to\Omega$ be the projection to the first coordinate.
Then it is evident that
\begin{equation}\label{equ:NonInj_defn_2}
 \noninj{\Xmap} = p(\C).
\end{equation}
The following lemma describes some properties of the set of non-injectivity.
\begin{lemma}\label{lm:noninj_prop}
{\rm 1)}
$\Xmap^{-1}\Xmap(\noninj{\Xmap}) = \noninj{\Xmap}$.

{\rm2)} For any subset $F\subset \Xman\setminus \noninj{\Xmap}$ the restriction $\Xmap|_{F}:F\to \Yman$ is injective.

{\rm 3)}
Suppose $\Xman$ and $\Yman$ are Hausdorff, $\Xman$ is also \textcolor{blue}{second countable} and locally compact (e.g. a manifold).
Then $\noninj{\Xmap}$ is an $F_{\sigma}$ subset of $\Xman$, and in particular $\noninj{\Xmap}\in\Borel(\Omega)$.
\end{lemma}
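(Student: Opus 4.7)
Parts (1) and (2) are formal consequences of the defining property of $\noninj{\Xmap}$, and I would dispatch them first. For (1), the inclusion $\noninj{\Xmap} \subset \Xmap^{-1}\Xmap(\noninj{\Xmap})$ is automatic. Conversely, if $\omega \in \Xmap^{-1}\Xmap(\noninj{\Xmap})$ then $\Xmap(\omega) = \Xmap(\omega')$ for some $\omega' \in \noninj{\Xmap}$, which admits an $\omega'' \neq \omega'$ with $\Xmap(\omega'') = \Xmap(\omega')$; a short case split on whether $\omega = \omega'$ produces, in either case, a point of $\Xmap^{-1}\Xmap(\omega)$ distinct from $\omega$, so $\omega \in \noninj{\Xmap}$. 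Part (2) is the contrapositive: two distinct points of $F$ with the same image would both lie in $\noninj{\Xmap}$, contradicting $F \subset \Xman \setminus \noninj{\Xmap}$.

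For part (3), the strategy is to use the description $\noninj{\Xmap} = p(\C)$ from \eqref{equ:NonInj_defn_2} and to exhibit $\C$ as a countable union of compact subsets of $\Xman^2$. Since $\Yman$ is Hausdorff, $\Delta\Yman^2$ is closed in $\Yman^2$, and by continuity of $\Xmap^2$ the preimage $(\Xmap^2)^{-1}(\Delta\Yman^2)$ is closed in $\Xman^2$. Since $\Xman$ is Hausdorff, the diagonal $\Delta\Xman^2$ is closed, hence $W := \Xman^2 \setminus \Delta\Xman^2$ is open. The product $\Xman^2$ is second countable and locally compact Hausdorff, and so is the open subspace $W$; by the standard argument (cover each point of $W$ by a compact neighborhood lying inside $W$ via local compactness, then extract a countable subcover using second countability) $W$ is $\sigma$-compact, say $W = \bigcup_{n\geq 1} V_n$ with each $V_n$ compact in $\Xman^2$.

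Writing $\C = (\Xmap^2)^{-1}(\Delta\Yman^2) \cap W = \bigcup_n [(\Xmap^2)^{-1}(\Delta\Yman^2) \cap V_n]$ exhibits $\C$ as a countable union of sets, each of which is an intersection of a closed set with a compact set and hence compact. Applying the continuous projection $p$ then gives $\noninj{\Xmap} = \bigcup_n p((\Xmap^2)^{-1}(\Delta\Yman^2) \cap V_n)$, a countable union of compact subsets of $\Xman$; since $\Xman$ is Hausdorff these are closed, so $\noninj{\Xmap}$ is $F_\sigma$ and in particular Borel.

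The only nonformal step is verifying that the open subset $W$ of the second countable, locally compact, Hausdorff space $\Xman^2$ is $\sigma$-compact. This is classical but worth writing out explicitly, since everything else in the proof is either a tautological set-theoretic manipulation or an immediate consequence of continuity together with the Hausdorff property.
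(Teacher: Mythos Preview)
Your proposal is correct and follows essentially the same route as the paper: parts (1) and (2) are dismissed as evident there (you spell them out, which is fine), and for (3) both arguments use that $\Yman$ Hausdorff makes $(\Xmap^2)^{-1}(\Delta\Yman^2)$ closed, then exploit second countability and local compactness to write $\C$ as a countable union of compacta whose images under $p$ are closed. The only cosmetic difference is that the paper asserts directly that $\C$ is second countable and locally compact (hence $\sigma$-compact), whereas you first exhibit the open set $W=\Xman^2\setminus\Delta\Xman^2$ as $\sigma$-compact and then intersect with the closed set; the content is identical.
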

\begin{proof}
Statements 1) and 2) are evident.
Let us prove 3).
It is easy to see that a topological space $\Yman$ is Hausdorff iff the diagonal $\Delta\Yman^2$ is closed in $\Yman^2$.
This implies that $(\Xmap^2)^{-1}(\Delta\Yman^2)$ is closed in $\Xman^2$, hence
$\C$ defined by~\eqref{equ:symm_noninj} is \textcolor{blue}{second countable} and locally compact as well.
Therefore $\C=\bigcup\limits_{i=1}^{\infty} \C_i$ where each $\C_i$ is compact.
Hence
\[
\noninj{\Xmap} = p(\C) = p\left(\bigcup\limits_{i=1}^{\infty} \C_i \right) = \bigcup\limits_{i=1}^{\infty} p(\C_i).
\]
But each $p(\C_i)$ is compact and so closed in $\Xman$.
Hence $\noninj{\Xmap}$ is an $F_{\sigma}$-set.
\end{proof}

Recall that a \emph{Polish} space is a \textcolor{blue}{second countable} completely metrizable topological space.
\begin{theorem}\label{th:non_inj}
Let $\Xman$ and $\Yman$ be Polish spaces, $\mu$ be a measure on $\Borel(\Xman)$, $\Xmap:\Xman\to\Yman$ be a continuous map, and $\noninj{\Xmap}$ be the set of its non-injectivity.
Suppose $\noninj{\Xmap} \in \Borel(\Xman)$ and $\mu(\noninj{\Xmap})=0$.
Then $\sigma(\Xmap) \equcirc \Borel(\Xman)$.
\end{theorem}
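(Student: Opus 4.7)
The plan is to prove the two directions of the equivalence $\sigma(\Xmap)\equcirc\Borel(\Xman)$ separately. The direction $\sigma(\Xmap)\subcirc\Borel(\Xman)$ is automatic: continuity of $\Xmap$ makes it Borel--Borel measurable, so $\sigma(\Xmap)\subset\Borel(\Xman)$ as honest set families, and one can take each $S\in\sigma(\Xmap)$ to be its own Borel approximant.

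For the non-trivial direction $\Borel(\Xman)\subcirc\sigma(\Xmap)$, I would fix $B\in\Borel(\Xman)$ and produce $A\in\sigma(\Xmap)$ with $\mu(A\bigtriangleup B)=0$. Write $X'=\Xman\setminus\noninj{\Xmap}$; by hypothesis $X'$ is Borel, and by Lemma~\ref{lm:noninj_prop}(2) the restriction $\Xmap|_{X'}$ is injective. Since $\mu(\Xman\setminus X')=0$, it is enough to arrange $A\cap X' = B\cap X'$: this forces $A\bigtriangleup B\subset\noninj{\Xmap}$ and hence $\mu(A\bigtriangleup B)=0$. The natural candidate is
\[
 A \ = \ \Xmap^{-1}\bigl(\Xmap(B\cap X')\bigr),
\]
and $A\cap X'=B\cap X'$ is a one-line verification using injectivity of $\Xmap|_{X'}$: one inclusion is obvious, and for the other, if $\omega\in A\cap X'$ then $\Xmap(\omega)=\Xmap(\omega')$ for some $\omega'\in B\cap X'$, which forces $\omega=\omega'\in B$.

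The substantive step is to show that $A$ actually lies in $\sigma(\Xmap)$, equivalently that $\Xmap(B\cap X')$ is Borel in $\Yman$. Here I would invoke the Lusin--Souslin theorem from descriptive set theory: a Borel-measurable injection from a standard Borel space into a Polish space sends Borel sets to Borel sets. In our setup $X'$ is a Borel subset of the Polish space $\Xman$, hence is itself a standard Borel space; $B\cap X'$ is Borel in $X'$; and $\Xmap|_{X'}$ is a continuous (a fortiori Borel) injection into the Polish space $\Yman$. Therefore $\Xmap(B\cap X')\in\Borel(\Yman)$, which places $A$ in $\sigma(\Xmap)$ and completes the proof.

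The main obstacle is precisely this appeal to Lusin--Souslin: it is the only place where the Polish hypotheses on $\Xman$ and $\Yman$ enter in an essential way, and it is not an elementary fact --- without it the image $\Xmap(B\cap X')$ need only be analytic rather than Borel, which would not be enough. Everything else in the argument is soft bookkeeping driven by the observation that, modulo the $\mu$-null set $\noninj{\Xmap}$, the map $\Xmap$ behaves like an injection.
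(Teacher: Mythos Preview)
Your proposal is correct and follows essentially the same route as the paper's proof: both restrict attention to the Borel complement $X'=\Xman\setminus\noninj{\Xmap}$ where $\Xmap$ is injective, form the saturation $\Xmap^{-1}\bigl(\Xmap(B\cap X')\bigr)$, and invoke Lusin--Souslin (cited in the paper as \cite[Theorem~15.1]{Kechris:1995}) to place the image in $\Borel(\Yman)$. The only cosmetic difference is that the paper checks the approximation on open sets $G$ (which generate $\Borel(\Xman)$) rather than on arbitrary Borel $B$, and observes directly that $\Xmap^{-1}\bigl(\Xmap(G\cap X')\bigr)=G\cap X'$ --- which in fact your $A$ also satisfies, since any $\omega$ with $\Xmap(\omega)=\Xmap(\omega')$ for $\omega'\in X'$ is forced to equal $\omega'$.
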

\begin{proof}
Since $\sigma(\Xmap) \subset \Borel(\Xman)$, it remains to consider the inverse inclusion.
It suffices to show that for any open set $G\in\Borel(\Xman)$ there exists some $\widetilde{G}\in\sigma(\Xmap)$ such that $\mu(G\bigtriangleup \widetilde{G}) = 0$.
Given $G$, put
\[
 \widetilde{G} = G \setminus \noninj{\Xmap}.
\]
Then by 2) of Lemma~\ref{lm:noninj_prop} the restriction $\Xmap|_{\widetilde{G}}:\widetilde{G}\to \Xmap(\widetilde{G})$ is one-to-one.
So $\Xmap(\widetilde{G})$ is a one-to-one image of the Polish space $\widetilde{G}$ under the continuous map $\Xmap|_{\widetilde{G}}:\widetilde{G}\to\Yman$.
This implies, \cite[Theorem~15.1]{Kechris:1995}, that $\Xmap(\widetilde{G}) \in \Borel(\Yman)$, whence
\[
 \widetilde{G} = \Xmap^{-1}(\Xmap(\widetilde{G})) \ \in \ \sigma(\Xmap).
\]
Therefore $\mu(G\bigtriangleup \widetilde{G}) \,=\, \mu(G\cap\noninj{\Xmap}) \,\leq\, \mu(\noninj{\Xmap}) \,=\, 0.$
\end{proof}


\subsection{Multijets transversality theorem}
The proof of Theorem~\ref{th:setX_residual} is based on the so-called multijets transversality theorem, see~\cite[Chapter 2, Theorem~4.13]{GolubitskyGuillemin:GTM:1973}.
We will formulate it below preserving the notation from~\cite{GolubitskyGuillemin:GTM:1973}.

Let $\Xman$ and $\Yman$ be smooth manifolds, $\dim \Xman=m$, $\dim \Yman=n$, $J^k(\Xman,\Yman)$ be the manifold of $k$-jets of smooth maps $\Xmap:\Xman\to \Yman$,
\[
 \alpha:J^k(\Xman,\Yman) \to \Xman
\]
be the natural projection to the source,
\[
 \alpha^s=\alpha\times\cdots\times\alpha:J^k(\Xman,\Yman)^s \to \Xman^s,
\]
\[
 \Xman^{(s)} = \{(\omega_1,\ldots,\omega_n) \in \Xman^s \mid \omega_i\not=\omega_j \ \text{for} \ i\not=j \},
\]
and
\[
 J^k_s(\Xman,\Yman) = (\alpha^s)^{-1} \Xman^{(s)}.
\]
Then $J^k_s(\Xman,\Yman)$ is an open submanifold of $J^k(\Xman,\Yman)^{s}$ and we have the map
\[
 j^k_s \Xmap : \Xman^{(s)} \to J^k_s(\Xman,\Yman),
 \qquad
 j^k_s \Xmap(\omega_1,\ldots,\omega_s) =
 \bigl(j^k\Xmap(\omega_1),\ldots,j^k\Xmap(\omega_s)\bigr).
\]

The following result is called \emph{multijets transversality theorem}.

\begin{theorem}\label{tm:multijets_transv}{\rm \cite[Chapter 2, Theorem~4.13]{GolubitskyGuillemin:GTM:1973}}.
Let $\Wman$ be a submanifold in $J^k_s(\Xman,\Yman)$.
Endow $C^{\infty}(\Xman,\Yman)$ with the strong topology $C^{\infty}_S$.
Then the set
\[
 \cmgset_{\Wman} = \{ \Xmap\in C^{\infty}(\Xman,\Yman) \mid j^k_s \Xmap \ \text{is transversal to} \ \Wman \}
\]
is residual in $C^{\infty}(\Xman,\Yman)$.
If $\Wman$ is compact, then $\cmgset_{\Wman}$ is also open.
\qed
\end{theorem}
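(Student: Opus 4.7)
The plan is to derive the multijet transversality theorem from the parametric transversality theorem applied to a global evaluation map, following the classical strategy in Golubitsky--Guillemin. The main ingredient is the parametric transversality theorem: if $A$ and $N$ are smooth manifolds, $W\subset N$ is a submanifold, and $\Phi\colon A\times M\to N$ is a smooth map transverse to $W$, then the set of $a\in A$ for which $\Phi(a,\cdot)\colon M\to N$ is transverse to $W$ is residual in $A$. The infinite-dimensional parameter $A=C^{\infty}(\Xman,\Yman)$ will be handled either by a Sard--Smale argument or, more elementarily, by reducing to finite-dimensional families of perturbations that locally exhaust the relevant directions.

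The first step is to introduce the evaluation map
\[
 \Phi: C^{\infty}(\Xman,\Yman) \times \Xman^{(s)} \longrightarrow J^k_s(\Xman,\Yman), \qquad \Phi(\Xmap,\omega_1,\ldots,\omega_s) = j^k_s\Xmap(\omega_1,\ldots,\omega_s),
\]
and to verify that $\Phi$ is a \emph{submersion}, and hence automatically transverse to any submanifold $\Wman$. This is the central technical step. Fixing a point $(\Xmap_0,\omega_1,\ldots,\omega_s)$, one exploits the fact that the $\omega_i$ are pairwise distinct to select pairwise disjoint, relatively compact coordinate charts $U_i$ about $\omega_i$ together with smooth bump functions $\chi_i$ supported in $U_i$ and equal to $1$ near $\omega_i$. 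For arbitrary polynomials $P_1,\ldots,P_s$ of degree $\leq k$ (written in local coordinates on $U_i$, taking values in a chart of $\Yman$ near $\Xmap_0(\omega_i)$), the variation $\Xmap_0+t\sum_i\chi_i P_i$ realises a prescribed $k$-jet increment at each $\omega_i$ \emph{independently}. This shows that $d\Phi$ at $(\Xmap_0,\omega_1,\ldots,\omega_s)$ surjects onto $T J^k_s(\Xman,\Yman)$, establishing the submersion property.

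Once the submersion property is in hand, the parametric transversality theorem gives that for a residual set of $\Xmap$ the map $j^k_s\Xmap=\Phi(\Xmap,\cdot)$ is transverse to $\Wman$. To promote this to residuality in the \emph{strong} topology $C^{\infty}_{S}$, cover $\Xman^{(s)}$ by a countable family $\{K_\ell\}_{\ell\in\NNN}$ of compact subsets, and write $\cmgset_{\Wman}=\bigcap_{\ell}\cmgset_{\Wman,K_\ell}$, where $\cmgset_{\Wman,K_\ell}$ consists of $\Xmap$ with $j^k_s\Xmap\pitchfork \Wman$ on $K_\ell$. Each $\cmgset_{\Wman,K_\ell}$ is open in $C^{\infty}_S$ by stability of transversality on compact sets under small $C^{k+1}$ perturbations, and it is dense by the parametric argument above. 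Since $(C^{\infty}(\Xman,\Yman),C^{\infty}_S)$ is a Baire space by Lemma~\ref{lm:topologies}, the intersection $\cmgset_{\Wman}$ is residual.

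Finally, the openness assertion when $\Wman$ is compact reduces to a compactness observation: $\alpha^s(\Wman)\subset\Xman^{(s)}$ is compact, and $(j^k_s\Xmap)^{-1}(\Wman)\subset \alpha^s(\Wman)$ because $\alpha^s\circ j^k_s\Xmap=\mathrm{id}$. Therefore the transversality condition on $j^k_s\Xmap$ is genuinely a condition on a compact piece of $\Xman^{(s)}$, and it is stable under sufficiently small $C^{\infty}_S$-perturbations, giving openness of $\cmgset_{\Wman}$. The main obstacle will be the submersion step: one must show rigorously that the compactly supported perturbations $\sum_i\chi_i P_i$ constitute genuine tangent directions in $C^{\infty}(\Xman,\Yman)$ with the strong topology, and that they already suffice to hit every prescribed infinitesimal multijet displacement, thereby allowing the standard parametric transversality machinery to be applied uniformly over the non-manifold parameter space.
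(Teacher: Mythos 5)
The paper itself gives no proof of this statement --- it is quoted from \cite[Chapter 2, Theorem~4.13]{GolubitskyGuillemin:GTM:1973} --- and your sketch follows the classical strategy of that proof (jointly perturbing by polynomials cut off in disjoint charts around the distinct source points, then a Baire-category argument). Two things in your write-up, however, are not yet a proof. The lesser one: the parametric transversality theorem cannot be invoked with parameter space $A=C^{\infty}(\Xman,\Yman)$, which is neither a finite-dimensional nor a Banach manifold, and with the strong topology is not even a topological vector space for non-compact $\Xman$ (Lemma~\ref{lm:topologies}); so the ``finite-dimensional reduction'' you mention at the end is not an optional refinement but the actual content of the density step. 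It must be carried out: cover a compact piece of $\Xman^{(s)}$ by finitely many products of pairwise disjoint coordinate balls, use the family $\Xmap_0+\sum_i\chi_i P_i$ with the polynomial coefficients as a finite-dimensional parameter, check that the resulting finite-dimensional evaluation map is a submersion near that piece, and apply the ordinary transversality (Sard) argument to this family; your bump-function computation is exactly what makes this work, but as written it only establishes pointwise surjectivity of a formal differential of $\Phi$, which by itself proves nothing.

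The more serious gap is the decomposition $\cmgset_{\Wman}=\bigcap_{\ell}\cmgset_{\Wman,K_\ell}$ with the claim that each $\cmgset_{\Wman,K_\ell}$ is open ``by stability of transversality on compact sets''. This is false for a general submanifold $\Wman$, which need not be closed in $J^k_s(\Xman,\Yman)$: if $j^k_s\Xmap$ maps a point of $K_\ell$ into $\overline{\Wman}\setminus\Wman$, then $\Xmap$ may satisfy the transversality condition on $K_\ell$ vacuously while arbitrarily small perturbations push points of $K_\ell$ into $\Wman$ non-transversally, so $\cmgset_{\Wman,K_\ell}$ need not be open and the Baire argument does not go through as written. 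The standard repair, and the one used in \cite{GolubitskyGuillemin:GTM:1973} and \cite{Hirsch:DiffTop:1976}, is to exhaust $\Wman$ as well by countably many compact subsets $L_j$ and to work with the sets $\cmgset_{K_\ell,L_j}$ of maps that are transversal to $\Wman$ at every point of $K_\ell$ sent into $L_j$; these sets are open (compactness of $K_\ell$ and $L_j$, and $L_j$ is closed in the jet space) and dense (by the finite-dimensional perturbation argument), and $\cmgset_{\Wman}=\bigcap_{\ell,j}\cmgset_{K_\ell,L_j}$ is then residual. Note that your argument for the final openness assertion is sound precisely because a compact $\Wman$ is closed and $(j^k_s\Xmap)^{-1}(\Wman)\subset\alpha^s(\Wman)$ is contained in a fixed compact set, which is why that special case does not see the problem.
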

We will apply this theorem to the case $k=0$ and $s=2$.
First we will show that description~\eqref{equ:NonInj_defn_2} of $\noninj{\Xmap}$ is related to multijets transversality theorem.
Recall that $J^0(\Xman,\Yman) = \Xman\times \Yman$.
Then
\[
 J^0_2(\Xman,\Yman) = \{(\omega,\yy,\omega',\yy')\mid \omega\not=\omega' \} \subset  J^0(\Xman,\Yman)^2 = (\Xman\times \Yman)^2.
\]
Let $\beta:J^0_2(\Xman,\Yman) \to \Yman^2$ be the projection to the destination given by
\[
 \beta(\omega,\yy,\omega',\yy') = (\yy,\yy'),
\]
and
\begin{equation}\label{equ:W_dbl_points}
 \Wman = \beta^{-1}(\Delta \Yman^2) = \{(\omega,\yy,\omega',\yy)\mid \omega\not=\omega' \}.
\end{equation}
Evidently, $\beta$ is a submersion.
Therefore it is transversal to $\Delta \Yman^2$, and so $\Wman$ is a submanifold in $J^0_2(\Xman,\Yman)$ of codimension
\[\mathrm{codim}\,\Delta \Yman^2 = \dim \Yman^2-\dim \Delta \Yman^2 = \dim \Yman = n.\]
Also notice that $\Wman$ is non-compact.
Consider the map
\[
 j^0_2\Xmap: \Xman^{(2)}\to J^0_2(\Xman,\Yman), \,
 \qquad
 j^0_2\Xmap(\omega,\omega') = (\omega,\Xmap(\omega), \omega',\Xmap(\omega')).
\]
Then
\[
 \C = (j^0_2\Xmap)^{-1}(\Wman) = \{(\omega,\omega')\in \Xman^2 \mid \omega\not=\omega', \Xmap(\omega)=\Xmap(\omega') \},
\qquad
 \noninj{\Xmap} = p(\C),
\]
as in~\eqref{equ:symm_noninj} and~\eqref{equ:NonInj_defn_2}.

\begin{corollary}\label{cor:noninjset}
Let $\Wman$ be the submanifold of $J^0_2(\Xman,\Yman)$ given by~\eqref{equ:W_dbl_points} and
\begin{equation}\label{equ:X_to_Y_transv_W}
\cmgset_{\Wman} = \{ \Xmap\in C^{\infty}(\Xman,\Yman) \mid j^0_2 \Xmap \ \text{is transversal to} \ \Wman \}.
\end{equation}
Then by Theorem~{\rm\ref{tm:multijets_transv}} $\cmgset_{\Wman}$ is residual in $C^{\infty}(\Xman,\Yman)$ with respect to the strong topology $C^{\infty}_S$.
If $m<n$, then for each $\Xmap\in \cmgset_{\Wman}$
the set $\noninj{\Xmap}$ has measure zero in the sense of Definition~{\rm\ref{defn:zero_measure}}.
\end{corollary}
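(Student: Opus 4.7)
The plan is to handle the two claims separately, of which the first is nearly a triviality once the surrounding discussion is in place. Residuality of $\cmgset_{\Wman}$ in $C^{\infty}(\Xman,\Yman)$ with respect to $C^{\infty}_S$ is a direct invocation of Theorem~\ref{tm:multijets_transv} with $k=0$ and $s=2$: one needs only that $\Wman$ is a genuine submanifold of $J^0_2(\Xman,\Yman)$, and this is exactly what the paragraph preceding the corollary establishes by writing $\Wman=\beta^{-1}(\Delta \Yman^2)$ with $\beta$ a submersion, yielding $\mathrm{codim}\,\Wman=n$.

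For the measure-zero assertion, the plan is to fix $\Xmap\in\cmgset_{\Wman}$ and run a dimension count. Transversality of $j^0_2\Xmap:\Xman^{(2)}\to J^0_2(\Xman,\Yman)$ to $\Wman$ implies that $\C=(j^0_2\Xmap)^{-1}(\Wman)$ is either empty or a smooth submanifold of the $2m$-dimensional manifold $\Xman^{(2)}$ of codimension $n$, hence of dimension $2m-n$. Under the hypothesis $m<n$, one has $2m-n<m$ in either case (the preimage being empty when $2m<n$).

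Next I would pass to $\noninj{\Xmap}=p(\C)$, where $p:\Xman^2\to\Xman$ is projection to the first factor. Since $\Xman$ is second countable, so is $\C$, and $\C$ is therefore $\sigma$-compact; writing $\C=\bigcup_{i=1}^{\infty}\C_i$ with each $\C_i$ compact gives $\noninj{\Xmap}=\bigcup_{i=1}^{\infty}p(\C_i)$. In any chart $(U,\varphi)$ on $\Xman$, the set $\varphi(p(\C_i)\cap U)$ is the smooth image in $\RRR^m$ of a compact subset of a manifold of dimension $2m-n<m$, which is a Lebesgue null set by the standard fact that a $C^1$ map from a lower-dimensional Euclidean space into $\RRR^m$ sends compacta to null sets (essentially the easy half of Sard). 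Countable unions of null sets remain null, so $\varphi(\noninj{\Xmap}\cap U)$ has Lebesgue measure zero for every chart, which is precisely the requirement of Definition~\ref{defn:zero_measure}.

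The only mildly non-routine ingredient is the standard smooth-image-of-lower-dimen\-sional-manifold lemma, which I would cite rather than reprove. Everything else is bookkeeping: unraveling transversality to obtain $\dim \C = 2m-n$, projecting into $\Xman$, and then patching together across charts by $\sigma$-compactness.
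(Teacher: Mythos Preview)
Your proposal is correct and follows essentially the same route as the paper: both use transversality of $j^0_2\Xmap$ to $\Wman$ to conclude that $\C=(j^0_2\Xmap)^{-1}(\Wman)$ is a submanifold of $\Xman^{(2)}$ with $\dim\C=2m-n<m$, and then argue that the projection $p(\C)=\noninj{\Xmap}$ has measure zero. The only cosmetic difference is in how this last step is justified: the paper observes that since $\dim\C<\dim\Xman$ every point of $\C$ is critical for $p|_{\C}$, so $\noninj{\Xmap}$ is the set of critical values and Sard's theorem applies directly; you instead exhaust $\C$ by compacta via $\sigma$-compactness and invoke the elementary fact that a $C^1$ image of a lower-dimensional compactum is Lebesgue null in each chart. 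These are two phrasings of the same underlying fact.
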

\begin{proof}
%
%
Notice that $\Wman$ has codimension $n$ in $J^0_2(\Xman,\Yman)$.
Therefore the submanifold $\C= (j^0_2\Xmap)^{-1}(\Wman)$ has the same codimension $n$ in $\Xman^{(2)}$.
Since $m<n$, we obtain that
\[
 \dim \C = \dim \Xman^{(2)} - n = 2m-n < m =\dim \Xman.
\]
Consider the restriction $p|_{\C}:\C \to \Xman$.
As $\dim \C < \dim \Xman$, each point of $\C$ is critical for $p|_{\C}$, whence by Sard's theorem, \cite{GolubitskyGuillemin:GTM:1973}, the image of the set of critical points of $p|_{\C}$, i.e. the set $p(\C) = \noninj{\Xmap}$, has measure zero in the sense of Definition~\ref{defn:zero_measure}.
\end{proof}
%
\begin{corollary}\label{cor:preval}
Suppose in Corollary~\ref{cor:noninjset} $\Yman=\RRR^n$ for some $n$, so $C^{\infty}(\Xman,\RRR^n)$ is a linear space.
Then the set $\cmgset_{\Wman}$ has a probe.
In particular, by Lemmas~\ref{lm:prevalency_prop} and~\ref{lm:topologies} it is prevalent with respect to any of weak topologies $C^{r}_{W}$ on $C^{\infty}(\Xman,\RRR^n)$.
\end{corollary}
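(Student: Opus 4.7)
My plan is to exhibit an explicit finite-dimensional subspace $P\subset C^{\infty}(\Xman,\RRR^n)$ serving as a probe for $\cmgset_{\Wman}$, and then invoke Lemma~\ref{lm:prevalency_prop}(4) together with Lemma~\ref{lm:topologies}(3),(4). The core tool will be parametric transversality applied to a linear family of perturbations of a given $\Xmap$.

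First, using the Whitney embedding theorem, I would fix a smooth embedding $\psi\colon\Xman\to\RRR^{N}$ for some $N$, and set
\[
  P \ = \ \{\,v\in C^{\infty}(\Xman,\RRR^n) \mid v(\omega)=A\,\psi(\omega)\ \text{for some}\ A\in M_{n\times N}(\RRR)\,\},
\]
which is a linear subspace of dimension $nN$. For a fixed $\Xmap\in C^{\infty}(\Xman,\RRR^n)$, consider
\[
  F\colon P\times\Xman^{(2)}\to\RRR^{n}, \qquad F(v,\omega,\omega') \ = \ (\Xmap+v)(\omega)-(\Xmap+v)(\omega').
\]
The partial derivative of $F$ in the $P$-direction sends $w=A\psi\in P$ to $A(\psi(\omega)-\psi(\omega'))$; since $\psi$ is injective on $\Xman^{(2)}$, the vector $\psi(\omega)-\psi(\omega')\in\RRR^N$ is nonzero, so $A\mapsto A(\psi(\omega)-\psi(\omega'))$ is a linear surjection onto $\RRR^n$. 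Thus $F$ is a submersion, $F^{-1}(0)$ is a smooth submanifold of $P\times\Xman^{(2)}$, and Sard's theorem applied to the projection $F^{-1}(0)\to P$ shows that the set of its critical values has Lebesgue measure zero in $P$.

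It remains to match transversality notions: for any $(\omega,\omega')\in F_v^{-1}(0)$, both the statement that $F_v=F(v,\cdot,\cdot)$ is transversal to $\{0\}$ and the statement that $j^{0}_{2}(\Xmap+v)$ is transversal to $\Wman$ unravel into the single condition
\[
 \mathrm{Im}\,d(\Xmap+v)_{\omega}\ +\ \mathrm{Im}\,d(\Xmap+v)_{\omega'}\ =\ \RRR^{n}.
\]
Consequently, for almost every $v\in P$ we have $\Xmap+v\in\cmgset_{\Wman}$, which means precisely that $P\cap\bigl[(C^{\infty}(\Xman,\RRR^n)\setminus\cmgset_{\Wman})-\Xmap\bigr]$ has Lebesgue measure zero in $P$. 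Since $\Xmap$ was arbitrary, $P$ is a probe for $\cmgset_{\Wman}$, and prevalence with respect to the weak topology $C^{\infty}_W$ then follows from Lemma~\ref{lm:prevalency_prop}(4) together with Lemma~\ref{lm:topologies}(3),(4).

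I expect the main technical point to be the verification that the two transversality conditions above really do coincide at each point of $F_v^{-1}(0)$; this is an essentially routine computation with tangent spaces in $J^{0}_{2}(\Xman,\RRR^n)$, but care is required in tracking how the \emph{base} and \emph{fiber} contributions are quotiented out by $T\Wman$.
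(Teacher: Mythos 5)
Your proof is correct, and it arrives at the probe by a genuinely different construction than the paper's. The paper builds $P=\{L_v\mid v\in\RRR^k\}$ with $L_v(\omega)=G(\omega)v$, where $G:\Xman\to\Mnk$ (with $k\geq 2(m+n)$) is produced in Lemma~\ref{lm:constr_of_G} by a \emph{second} application of the multijet transversality theorem so that the stacked matrix $\Phi(G(\omega),G(\omega'))$ has rank $2n$ for all $\omega\neq\omega'$; this makes the whole jet map $\Psi(\omega,\omega',v)=j^0_2(\Xmap+L_v)(\omega,\omega')$ a submersion into $J^0_2(\Xman,\RRR^n)$, after which Sard's theorem is applied to the projection $\Psi^{-1}(\Wman)\to\RRR^k$ exactly as you apply it to $F^{-1}(0)\to P$. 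You replace Lemma~\ref{lm:constr_of_G} by a Whitney embedding $\psi:\Xman\to\RRR^N$ and need only $\psi(\omega)\neq\psi(\omega')$, because you work with the difference map $F$ into $\RRR^n$ and aim only at transversality to $\Wman$ rather than at surjectivity onto all of $TJ^0_2$; indeed your family need not satisfy the rank-$2n$ condition of Lemma~\ref{lm:constr_of_G} (it fails when $\psi(\omega)$ and $\psi(\omega')$ are parallel), but that stronger condition is not required. The price you pay is the explicit matching of the two transversality notions, and your reduction of both to $\mathrm{Im}\,d(\Xmap+v)_{\omega}+\mathrm{Im}\,d(\Xmap+v)_{\omega'}=\RRR^n$ is correct: $T\Wman$ contains all base directions together with the diagonal of $\RRR^n\times\RRR^n$, so transversality of $j^0_2\Xmap_v$ to $\Wman$ quotients down to surjectivity of $(\xi,\xi')\mapsto d\Xmap_{v,\omega}\xi-d\Xmap_{v,\omega'}\xi'$, which is exactly transversality of $F_v$ to $\{0\}$. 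Net effect: your route is more elementary (one invocation of transversality machinery instead of two, with the Whitney embedding doing the work of Lemma~\ref{lm:constr_of_G}), while the paper's stays entirely inside the jet-space formalism and thereby avoids the transversality-matching computation. The only point worth polishing is the (harmless, and also implicit in the paper) identification of ``almost every $A\in M_{n\times N}(\RRR)$'' with ``almost every element of $P$'' when $A\mapsto A\psi$ is not injective.
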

\begin{proof}
First we introduce some notation and prove Lemma~\ref{lm:constr_of_G} below.
Let $\Mnk$ be the space of $(n\times k)$-matrices ($n$ rows and $k$ columns) which can be identified with $\RRR^{nk}$, and $D_r(n,k)$ be the subset of $\Mnk$ consisting of matrices of rank $r$.
Then $D_r(n,k)$ is a smooth submanifold of codimension $(n-r)(k-r)$, see e.g.~\cite[Lemma~1.19]{Milnor:DiffTop}.
Define the map
\[
  \Phi: \Mnk \times \Mnk \to M(2n,k),
\]
which associate to each pair $A,B\in \Mnk$ the matrix $\Phi(A,B)$ obtained by appending all rows of $B$ to $A$, see Figure~\ref{fig:matr_PhiAB}.
\begin{figure}[h]
\includegraphics[height=1.5cm]{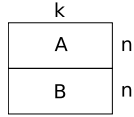}
\protect\label{fig:matr_PhiAB}\caption{Matrix $\Phi(A,B)$}
\end{figure}
Evidently, $\Phi$ is a smooth diffeomorphism.

Now we can construct the probe for $\cmgset_{\Wman}$.
Let $G:\Omega\to\Mnk$ be a map satisfying statement of Lemma~\ref{lm:constr_of_G}.
For each $v\in\RRR^k$ define the following smooth map
\[
 L_v:\Omega\to\RRR^n,
 \qquad
 L_v(\omega) = G(\omega) v,
\]
and let
\[
 P \, = \,  \{ L_v \mid v\in\RRR^k \}  \ \subset \ C^{\infty}(\Omega,\RRR^n).
\]
Then $P$ is a linear subspace of $C^{\infty}(\Omega,\RRR^n)$ of dimension $\leq k$.
We claim that $P$ is a probe for $\cmgset_{\Wman}$.

Indeed, let $\Xmap\in C^{\infty}(\Omega,\RRR^n)$ be any map.
For each $v\in\RRR^k$ denote
\[
 \Xmap_v = \Xmap + L_v,
\]
so the translation of $P$ by $\Xmap$ is the following affine subspace of $C^{\infty}(\Omega,\RRR^n)$:
\[
 \Xmap + P = \{ \Xmap_v \mid v\in\RRR^k\}.
\]
We should prove that the following set:
\[
Q = \{ v \in \RRR^k \mid j^0_2\Xmap_v \ \text{is not transversal to} \ \Wman \}
\]
has Lebesgue measure zero in $\RRR^k$.
Define the map
\[\Psi:\Omega^{(2)} \times\RRR^k \to J^2_0(\Omega,\RRR^n) \subset (\Omega\times\RRR^n)^2\]
by
\[
\Psi(\omega,\omega',v) := j^{0}_{2}(\Xmap + L_v)(\omega,\omega') =
\bigl(\omega,\, \Xmap(\omega) + L_v(\omega),\, \omega',\, \Xmap(\omega')+ L_v(\omega') \bigr).
\]
Then the Jacobi matrix of $\Psi$ at point $(\omega,\omega',v)$ has the form shown in Figure~\ref{fig:JPsi}, and so its rank is maximal and equals $2(m+n)$ due to the choice of $G$.
\begin{figure}[h]
\includegraphics[height=3cm]{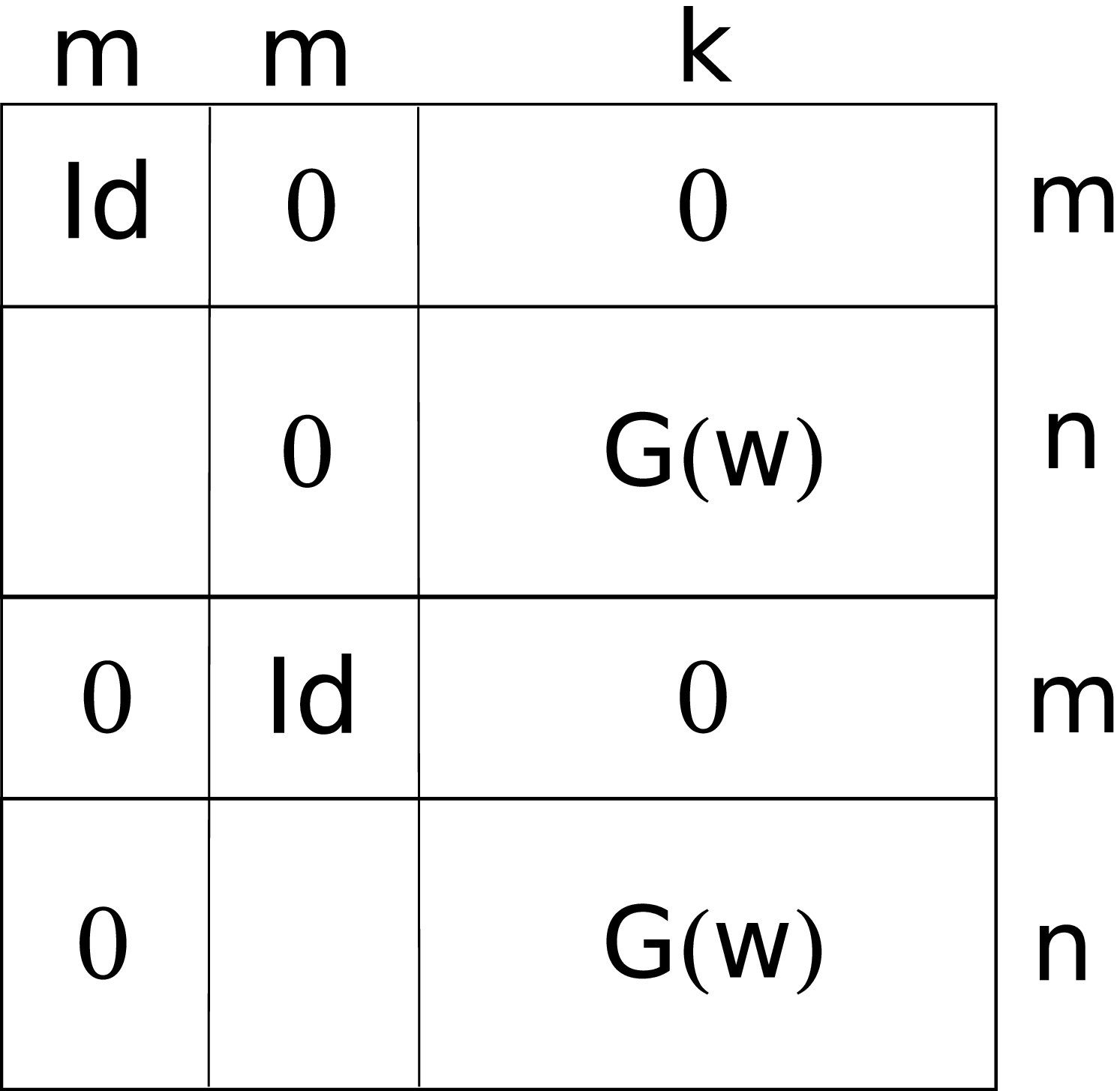}
\protect\label{fig:JPsi}\caption{Jacobi matrix of $\Psi$ at point $(\omega,\omega',v)$}
\end{figure}
Hence $\Psi$ is a submersion.
Therefore it is transversal to $\Wman$, and so $\tC = \Psi^{-1}(\Wman)$ is a submanifold in $\Omega^{(2)} \times\RRR^k$.
Let $\pi:\tC\to\RRR^k$ be the restriction to $\tC$ of the natural projection $\Omega^{(2)} \times\RRR^k \to \RRR^k$.
Then it is easy to see that $Q$ coincides with the set of critical values of $\pi$.
Since $\pi$ is smooth, we get from Sard's theorem that $Q$ has Lebesgue measure zero, see e.g.~\cite[Chapter~2, \S3]{GuilleminPolak:1974}.
Corollary~\ref{cor:preval} is completed.
\end{proof}

\begin{lemma}\label{lm:constr_of_G}
If $k \geq 2(m+n)$, then there exists a smooth map $G:\Omega\to\Mnk$ such that for any pair of distinct points $\omega\not=\omega'\in \Omega$ the matrix
\[
 \Phi(G(\omega),G(\omega'))
\]
has rank $2n$.
\end{lemma}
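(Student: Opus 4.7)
\medskip

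\noindent\emph{Proof plan.}
The plan is to apply the multijet transversality theorem (Theorem~\ref{tm:multijets_transv}) directly to the target $Y=\Mnk\cong\RRR^{nk}$, with jet order $k=0$ and $s=2$, and then use a dimension count to force the ``bad set'' to be empty.

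First I would describe the forbidden locus. Since $\Phi:\Mnk\times\Mnk\to M(2n,k)$ is a linear diffeomorphism, the condition $\mathrm{rank}\,\Phi(A,B)<2n$ cuts out a real algebraic subset $S\subset\Mnk\times\Mnk$ which stratifies as
\[
 S \;=\; \bigcup_{r=0}^{2n-1} S_r,
 \qquad
 S_r := \{(A,B)\in\Mnk\times\Mnk\mid \mathrm{rank}\,\Phi(A,B)=r\}.
\]
By the classical fact that the rank-$r$ locus in $M(2n,k)$ is a smooth submanifold of codimension $(2n-r)(k-r)$ (see e.g.\ \cite[Lemma 1.19]{Milnor:DiffTop}), each $S_r$ is a smooth submanifold of $\Mnk\times\Mnk$ of codimension $(2n-r)(k-r)$.

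Next I would translate this into the multijet picture. For $s=2$, $k=0$, one has $J^0_2(\Omega,\Mnk)=\Omega^{(2)}\times\Mnk\times\Mnk$ and
\[
 j^0_2 G(\omega,\omega') \;=\; (\omega,G(\omega),\omega',G(\omega')).
\]
Put $\widetilde S_r:=\Omega^{(2)}\times S_r$, a smooth submanifold of $J^0_2(\Omega,\Mnk)$ of the same codimension $(2n-r)(k-r)$. By Theorem~\ref{tm:multijets_transv}, for each $r$ the set
\[
 \cmgset_r \;=\; \{G\in C^{\infty}(\Omega,\Mnk)\mid j^0_2 G \pitchfork \widetilde S_r\}
\]
is residual in $C^{\infty}(\Omega,\Mnk)$ with respect to $C^\infty_S$. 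Since $C^{\infty}(\Omega,\Mnk)$ is a Baire space (Lemma~\ref{lm:topologies}), the finite (hence countable) intersection $\bigcap_{r=0}^{2n-1}\cmgset_r$ is still residual, and in particular non-empty.

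Finally I would close with the dimension count. Pick any $G\in\bigcap_{r=0}^{2n-1}\cmgset_r$. Transversality gives that $(j^0_2 G)^{-1}(\widetilde S_r)$ is either empty or a submanifold of $\Omega^{(2)}$ of codimension $(2n-r)(k-r)$. The hypothesis $k\geq 2(m+n)$ forces
\[
 (2n-r)(k-r) \;\geq\; k-2n+1 \;\geq\; 2m+1 \;>\; 2m \;=\; \dim\Omega^{(2)}
\]
for every $r\in\{0,\ldots,2n-1\}$, so each preimage is necessarily empty. Consequently, $\mathrm{rank}\,\Phi(G(\omega),G(\omega'))=2n$ for all $\omega\neq\omega'$, which is precisely the desired conclusion. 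The one point that needs care — and which I see as the main (minor) obstacle — is the stratification step: $S$ itself is only a singular algebraic variety, so one must stratify by exact rank in order to quote multijet transversality, and then verify the uniform codimension bound $(2n-r)(k-r)\geq k-2n+1$ over all relevant strata, as done above.
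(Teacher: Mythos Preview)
Your proposal is correct and follows essentially the same argument as the paper: stratify the rank-deficiency locus in $M(2n,k)$ by exact rank, pull each stratum back to a submanifold $\widetilde D_r(2n,k)\subset J^0_2(\Omega,\Mnk)$ of codimension $(2n-r)(k-r)$, apply multijet transversality to obtain residual sets $T_r$, intersect over $r=0,\ldots,2n-1$, and use the dimension bound $(2n-r)(k-r)\geq k-2n+1>2m=\dim\Omega^{(2)}$ coming from $k\geq 2(m+n)$ to force all preimages to be empty. Aside from cosmetic differences in how you parameterize $J^0_2(\Omega,\Mnk)$ (product versus preimage via a submersion $\gamma$), your proof and the paper's coincide.
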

\begin{proof}
The proof is also based on the multijets transversality theorem.
Consider the following spaces:
\begin{align*}
J^0(\Xman,\Mnk) &= \Xman\times \Mnk,  \\
J^0_2(\Xman,\Mnk) &= \{(\omega,A,\omega',A')\mid \omega\not=\omega' \} \subset  J^0(\Xman,\Mnk)^2 = (\Xman\times \Mnk)^2.
\end{align*}
Let
\[
\gamma=\Phi\circ\beta:J^0_2(\Xman,\Mnk) \xrightarrow{~\beta~} \Mnk^2 \xrightarrow{~\Phi~} M(2n,k)
\]
be the projection to the destination $\beta$ composed with the diffeomorphism $\Phi$:
\[
 \gamma(\omega,\MA,\omega',\MA') = \Phi(\MA,\MA'),
\]
and
\begin{equation*} 
 \widetilde{D}_r(2n,k) = \gamma^{-1}(D_r(2n,k)) = \{(\omega,\MA,\omega',\MA')\mid \omega\not=\omega', \ \mathrm{rank}\,\Phi(\MA,\MA') = r\}.
\end{equation*}
for $r<2n$.
Since $\gamma$ is a submersion, it is transversal to $D_r(2n,k)$, and so $\widetilde{D}_r(2n,k)$ is a submanifold in $J^0_2(\Xman,\Mnk)$ of codimension
\[
\mathrm{codim}\,\widetilde{D}_r(2n,k) = \mathrm{codim}\,D_r(2n,k) = (2n-r)(k-r).
\]
Then by multijets transversality theorem the set
\[
 T_{r} = \{ G\in C^{\infty}(\Xman,\RRR^n) \mid j^0_2 G \ \text{is transversal to} \ \widetilde{D}_r(2n,k) \}
\]
is residual.
Hence, so is the intersection
\[
 T = \bigcap_{r=0}^{2n-1} T_{r},
\]
and, in particular, $T$ is non-empty.
We claim that any $G\in T$ satisfies the statement of the lemma.

First notice that the assumption $k\geq 2(m+n)$ is equivalent to the inequality: $k - 2n + 1 > 2m$.
Then for $0\leq r \leq 2n-1$ we have that
\[
\mathrm{codim}\, \widetilde{D}_{r}(2n,k) \ \geq \ \mathrm{codim}\, \widetilde{D}_{2n-1}(2n,k) \ = \ k - 2n + 1 \ >\  2m\  =\  \dim\Omega^{(2)},
\]
and so transversality of $j^0_2 G$ to $\widetilde{D}_r(2n,k)$ means that $j^0_2 G(\Omega^{(2)}) \cap \widetilde{D}_r(2n,k) = \varnothing$.
Thus if a map $G:\Omega\to\RRR^n$ belongs to $T$, then
\[ j^0_2 G(\Omega^{(2)}) \cap \widetilde{D}_r(2n,k) = \varnothing, \qquad r=0,1,\ldots,2n-1. \]
This means that $\mathrm{rank}\,\Phi(G(\omega),G(\omega'))=2n$ for any $\omega\not=\omega'\in\Omega$.
\end{proof}

\subsection*{Proof of Theorem~\ref{th:setX_residual}}\label{sect:proof_th:setX_residual}
Let $\Xman$ be a smooth manifold of dimension $m$, $\mu$ be a Lebesgue absolute continuous measure on $\Borel(\Xman)$, $\Tmap:\Xman\to\Xman$ be a measurable $\mu$-invariant transformation, and $n>m$.

Let $\cmgset = \cmgset_{\Wman}$ be defined by~\eqref{equ:X_to_Y_transv_W}.
Then by Corollaries~\ref{cor:noninjset} and~\ref{cor:preval} $\cmgset$ is residual with respect to the strong topology $C^{\infty}_S$ and prevalent with respect to the weak topology $C^{\infty}_W$.

We claim that~\eqref{hKS_is_a_limit} holds for each $\Xmap\in\cmgset$.
Indeed, by 3) of Lemma~\ref{lm:noninj_prop} $\noninj{\Xmap}$ is a Borel subset of $\Xman$.
Also by Corollary~\ref{cor:noninjset} it has measure zero in the sense of Definition~\ref{defn:zero_measure}.
Since $\mu$ is Lebesgue absolute continuous, we see that $\mu(\noninj{\Xmap})=0$, whence by Theorem~\ref{th:non_inj} $\sigma(\Xmap) \equcirc \Borel(\Xman)$.

Furthermore, as $\Xman$ is an $m$-dimensional manifold, it can be embedded in $(2m+1)$-cube being a compact metric space.
Therefore by (b) of Theorem~\ref{th:sigmaX_F_hKS_limPE} we have that
\[
 \hKS_{\mu}(\Tmap) =  \lim_{d\to\infty} \lim_{k\to\infty}\,\frac{1}{k}\, \Hmu\left(\tk{k}{\Pd{\Xmap}{\Tmap}{d}}\right).
\]
This completes Theorem~\ref{th:setX_residual}.
\qed



\providecommand{\bysame}{\leavevmode\hbox to3em{\hrulefill}\thinspace}
\providecommand{\MR}{\relax\ifhmode\unskip\space\fi MR }
\renewcommand{\MR}{\relax\ifhmode\unskip\space\fi MR }
\providecommand{\MRhref}[2]{%
  \href{http://www.ams.org/mathscinet-getitem?mr=#1}{#2}
}
\providecommand{\href}[2]{#2}

\end{document}